\documentclass[11pt]{article}


\usepackage[round]{natbib}

\usepackage{graphbox}

\usepackage{color}
\usepackage{float}
\restylefloat{table}

\usepackage[title,titletoc,toc]{appendix}

\usepackage{mathtools}
\usepackage{bigints}
\usepackage{amsthm}
\usepackage{graphicx}
\usepackage{pdfsync}
\usepackage{dsfont}
\usepackage{mathrsfs}
\usepackage{amssymb}
\usepackage{mathptmx}

\usepackage{pgf,tikz}
\usepackage{pgfplots}
\usetikzlibrary{automata}
\usetikzlibrary{arrows}
\usetikzlibrary{matrix,positioning}
\usetikzlibrary{snakes}
\usetikzlibrary{decorations.pathreplacing}

\usetikzlibrary{decorations.markings}
\usetikzlibrary{decorations.shapes}
\tikzset{decoration={brace}}

\numberwithin{equation}{section}
\usepackage{bm}
\newtheorem{theorem}{Theorem}[section]
\newtheorem{lemma}[theorem]{Lemma}

\theoremstyle{definition}
\newtheorem{example}[theorem]{Example}
\newtheorem{corollary}[theorem]{Corollary}
\newtheorem{remark}[theorem]{Remark}
\theoremstyle{remark}
\usepackage{tensor}
\usepackage{comment}

\newcommand{\Exp}{\mathds{E}}

\newcommand{\Prob}{\mathds{P}}

\newcommand{\dd}{\mathrm{d}}
\newcommand{\e}{\mathrm{e}}

\newcommand{\vect}[1]{\vec{#1}}
\renewcommand{\vect}[1]{\boldsymbol{\bm #1}}
\newcommand{\mat}[1]{\boldsymbol{\bm #1}}

\newcommand{\disabfig}
{
\begin{figure}[htb]
   \centering  
\begin{tikzpicture}[scale=0.75] 

\draw[thin] (0,7)  node {$\bullet$} node[above] {$s$} -- (10,7)  node {$\bullet$} node[above] {$T$}; 
\draw[thick,green] (0,7) -- (10,7); \draw (11,7)  node[right] {$F_0=\{\tau_0^1\wedge\tau_0^2>T\}$}; 
\draw (7,7) node {$\bullet$} node[above] {$S$};

\draw[thin] (0,6)  node {$\bullet$} -- (10,6)  node {$\bullet$}; \draw (7,6) node {$\bullet$};
\draw[thick,green] (0,6) -- (5,6); \draw (5,6) node{$*$} node[below] {$\tau_0^2$}; 
\draw[thick,red] (5,6) -- (10,6); 
\draw (11,6)  node[right] {$F_{2-}=\{0<\tau_0^2\le S\}$}; 

\draw[thin] (0,5)  node {$\bullet$} -- (10,5)  node {$\bullet$}; \draw (7,5) node {$\bullet$};
\draw[thick,green] (0,5) -- (8,5); \draw (8,5) node{$*$} node[below] {$\tau_0^2$}; 
\draw[thick,red] (8,5) -- (10,5); 
\draw (11,5)  node[right] {$F_{2+}=\{S<\tau_0^2\le T\}$}; 

\draw[thin] (0,4)  node {$\bullet$} -- (10,4)  node {$\bullet$}; \draw (7,4) node {$\bullet$};
\draw[thick,green] (0,4) -- (5,4); \draw (5,4) node{$*$} node[below] {$\tau_0^1$}; 
\draw[thick,blue] (5,4) -- (10,4); 
\draw (11,4)  node[right] {$F_{1-}=\{0<\tau_0^1\le S,\, \tau_1^2>T\}$}; 

\draw[thin] (0,3)  node {$\bullet$} -- (10,3)  node {$\bullet$}; \draw (7,3) node {$\bullet$};
\draw[thick,green] (0,3) -- (8,3); \draw (8,3) node{$*$} node[below] {$\tau_0^1$}; 
\draw[thick,blue] (8,3) -- (10,3); 
\draw (11,3)  node[right] {$F_{1+}=\{S<\tau_0^1 ,\, \tau_1^2>T\}$}; 

\draw[thin] (0,2)  node {$\bullet$} -- (10,2)  node {$\bullet$}; \draw (7,2) node {$\bullet$};
\draw[thick,green] (0,2) -- (5,2); \draw (5,2) node{$*$} node[below] {$\tau_0^1$}; 
\draw[thick,blue] (5,2) -- (6.5,2); \draw (6.5,2) node{$*$} node[below] {$\tau_1^2$};  \draw[thick,red] (6.5,2) -- (10,2); 
\draw (11,2)  node[right] {$F_{1_-2_-}=\{0<\tau_0^1<\tau_1^2\le S\}$}; 

\draw[thin] (0,1)  node {$\bullet$} -- (10,1)  node {$\bullet$}; \draw (7,1) node {$\bullet$};
\draw[thick,green] (0,1) -- (5,1); \draw (5,1) node{$*$} node[below] {$\tau_0^1$}; 
\draw[thick,blue] (5,1) -- (9,1); \draw (9,1) node{$*$} node[below] {$\tau_1^2$};  \draw[thick,red] (9,1) -- (10,1); 
\draw (11,1)  node[right] {$F_{1_-2_+}=\{0<\tau_0^1\le S<\tau_1^2\le T\}$}; 

\draw[thin] (0,0)  node {$\bullet$} -- (10,0)  node {$\bullet$}; \draw (7,0) node {$\bullet$};
\draw[thick,green] (0,0) -- (7.5,0); \draw (7.55,0) node{$*$} node[below] {$\tau_0^1$}; 
\draw[thick,blue] (7.5,0) -- (9,0); \draw (9,0) node{$*$} node[below] {$\tau_1^2$};  \draw[thick,red] (9,0) -- (10,0); 
\draw (11,0)  node[right] {$F_{1_+2_+}=\{0<\tau_0^1\le S<\tau_1^2\le T\}$}; 

\end{tikzpicture}
\caption{The sample space partition. State 0=active=green, State 1=disabled=blue, State 2=dead=red} 
\label{disabfig}
\end{figure}
}

 \newcommand{\varphiFig}{
  \begin{figure}[htb]
   \centering  
\begin{tikzpicture}[xscale=1.5,yscale=0.5] 
 \draw[thick] (0,5) to [out=-85, in=180] (2,1) -- (3.8,1);
 \draw[thin,<->] ((4,0) node[below] {$t$}-- (0,0) -- (0,6);
 \draw[thin,dashed] (2,0) node[below] {$a$} -- (2,1) (0,2) node[left] {$y=\varphi(t)$} --(0.55,2);
  \draw[thin,dashed] (0.55,0) node[below] {$t=\psi(y)$} -- (0.55,2) (0,1) node[left] {$b$} -- (2,1);
  \draw (0,5) node[left] {$b+a^2$};
\end{tikzpicture}
\caption{\mbox{$\varphi(t)$}}
\label{varphiFig}
\end{figure}
}

\oddsidemargin 1cm
\evensidemargin 1cm

\textwidth 15.5cm
\topmargin -1cm

\textheight 22cm
\parskip 1mm

\begin{document}
\title{Matrix calculations for inhomogeneous Markov reward processes, with applications to life insurance and point processes}

\author{Mogens Bladt$^1$, S\o ren Asmussen$^2$,  and Mogens Steffensen$^{3}$,\\
\small
1. University of Copenhagen, Department of Mathematical Sciences;
bladt@math.ku.dk \\
\small
2. Aarhus University, Department of Mathematics; asmus@math.au.dk 
\\
\small 
3. University of Copenhagen, Department of Mathematical Sciences;
mogens@math.ku.dk }
\maketitle

\begin{abstract}
A multi--state life insurance model is naturally described in terms of the intensity matrix
of an underlying (time--inhomogeneous) Markov process which describes
the dynamics for the states of an insured person. Between and at transitions, benefits and premiums
are paid, defining a payment process, and the technical reserve is defined as the present value
of all future payments of the contract. Classical methods for finding the reserve and higher order moments 
involve the solution of certain differential equations (Thiele and Hattendorf, respectively). In this paper
we present an alternative matrix--oriented approach based on general reward considerations for Markov jump
processes. The matrix approach provides a general framework for effortlessly setting up general and even 
complex multi--state models, where moments of all orders are then expressed explicitly in terms of so--called product integrals (matrix--exponentials) of certain matrices. 
As Thiele and Hattendorf type of theorems can be retrieved immediately from the matrix formulae, this methods also provides a quick and transparent approach to proving these classical results. Methods for obtaining distributions and related properties of interest (e.g. quantiles or survival functions) of the future payments are presented from both a theoretical and practical point of view (via Laplace transforms and methods involving orthogonal polynomials).
\end{abstract}

\section{Introduction}
In this paper we consider the distribution and moments of the total reward generated by an time inhomogeneous Markov process with a finite state space. Rewards may be earned in three different ways. During sojourns in a fixed state, rewards can be earned at a deterministic (time--dependent) rate and as deterministic (time--dependent) lump sums which arrive according to a non--homogeneous Poisson process. Finally, at the times of transition deterministic (time-dependent) lump sums, which also depends on the type of transition, may be earned w.p.\ 1 or at random with some probability.  We are particularly interested in the case of discounted rewards which have applications in life insurance. Here the rewards (premiums and benefits) are discounted by a deterministic (though time--dependent) interest rate. This setting is slightly more general than the standard life insurance set--up, and is inspired by the parametrisation of the general Markovian Arrival Process (MAP). In this way we also achieve the calculation of rewards and moments in the MAP and its time--inhomogeneous extension. 

Our method for deriving distributions and moments uses probabilistic (sample path) arguments and matrix algebra. In particular, the matrices of interest are readily derived from the intensity matrix of the underlying Markov process and a matrix of payments. This is true for both pure and discounted rewards, where in the latter case the interest rate may be accommodated conveniently into the intensity matrix. The Laplace transform for the total (discounted) reward is obtained as a product integral (which is a matrix exponential in the time--homogeneous case) involving these matrices. Concerning the moments, all moments up to order $k$ are obtained by a product integral of a $(k+1)\times (k+1)$ block matrix build upon the aforementioned matrices. The product integrals may be evaluated in a number of ways. Generally, the product integral satisfies a system of linear differential equation of the Kolmogorov type from which we retrieve both Thiele's differential equation and Hattendorff type of theorems. If the intensities and payments are piecewise constant, which may often be the case in practical implementations, the product integral reduces to a product of ma{}trix--exponentials which may be evaluated numerically by efficient methods like e.g. uniformisation. 

While higher order moments are rarely used in life insurance, our approach to general accessibility of all orders and their numerical computability up to quite high orders suggest that they can also be used for approximating the cumulative distribution function (c.d.f.) of the distribution, and thereby the calculation of quantiles (values at risk or confidence intervals) which could provide valuable information concerning the actual risk. We provide a first example along this
line by proposing a Gram-Charlier expansion to approximating both the density (p.d.f.) and the c.d.f.\ of the discounted future payment distribution when this is absolutely continuous. 
While this requires at least one continuous payment stream (e.g.\ 
 a premium) to be present, we will see that shape of the distribution can be challenging, particularly for the case of the p.d.f. 

The idea of using  multi-state (time inhomogeneous) Markov processes as a model in life insurance dates back at least to the 1960's and was put into a modern context by \cite{Hoem1969}, in which also Thiele's differential equations for the state--wise reserves are derived. Variance formulas for the future payments can e.g.\ be found in \cite{Ramlau1988} whereas for higher order moments we refer to \cite{NorbergRagnar1994Defm}. A differential equation (Thiele) approach to calculating the c.d.f.\ of the discounted future payments has been considered in \cite{HESSELAGER1996}. If one considers only unit rewards on all jumps and Poisson arrivals (no continuous rewards) in a time--homogeneous Markov jump process, then the total undiscounted reward up to time $t$ defines a point process which is known as a Markovian Arrival Process (MAP). The moments in the MAP have been shown to satisfy certain integral equations (which are equivalent to the differential equations from life insurance) as shown in \cite{bo-Uffe-2007}. Apart from defining a tractable class of point process with numerous applications in applied probability, the MAPs form a simple dense class of point processes on on the positive reals (see \cite{Asmussen:1993tn}).

The specific contributions of the paper are as follows. The Laplace transform of the total rewards (Theorem \ref{Th:4.5a}) generalises a similar result of \cite{bla:02} for time--homogeneous Markov processes. The matrix representation of moments depends on a crucial result given in Lemma~\ref{lemma:van-loan}, which generalises a similar result proved in \cite{VanLoan:1978tq} for the case of constant matrices. The reserves and moments we deal with in the analysis are the so--called partial reserves and moments, which are defined as the expected value of the (powers) of the future (discounted) payments contingent on the terminal state. These reserves and moments may well be of interest on their own. The matrix representation of all moments provides a unifying approach to the explicit solution of Thiele's and Hattendorff type of theorems. Working with solutions rather than the corresponding differential equations may greatly simplify subsequent analysis and applications.

The rest of the paper is organised as follows. In Section \ref{sec:background} we review the basic properties of the product integral, which will play an important role in the paper. The basic model and notation is set up in Section \ref{sec:model} and in Section \ref{sec:Thiele} we use a probabilistic argument to prove a slightly extended version of Thiele's differential equation. An important technical result regarding the calculation of certain ordinary integrals via product integrals is proved in Section \ref{sec:matrix-reserves}. The main construction takes place in Sections \ref{sec:laplace-transform} and \ref{sec:moments} where we derive explicit matrix representations for the Laplace transform and higher order moments of the discounted future payments (total reward). An slightly extended version of Hattendorff's theorem is derived as a consequence in the end of Section \ref{sec:moments}. As an example, we also calculate the higher order moments and factorial moments in a (time--homogeneous) Markovian Arrival Process. 
Since the moments of up to high orders are easily calculated, in Section 9 we explore the possibility of calculating the p.d.f.\ and c.d.f.\ for the total discounted future payments by means of orthogonal polynomial expansions based on central moments. In Section 9 we provide a numerical example, and in  Section 10 we conclude the paper.

\section{Some relevant background}\label{sec:background}
Let $\mat{A}(x)=\{ a_{ij}(x)  \}_{i,j=1,...,p}$ be a $p\times p$ matrix function. 
The product integral of $\mat{A}(x)$, written as 
\[ \mat{F}(s,t) = \prod_s^t (\mat{I}+ \mat{A}(x)\,\dd x)   , \]
where $\mat{I}$ denotes the identity matrix, may be defined in a number of equivalent ways. 
It is e.g.\ the solution to Kolmogorov forward differential equation 
\[  \frac{\partial}{\partial s}\mat{F}(s,t)=-\mat{A}(s)\mat{F}(s,t), \ \ \ \mat{F}(t,t)=\mat{I}, \]
which by integration and repeated substitutions also yields the Peano--Baker series representation
\begin{equation}
  \prod_s^t (\mat{I}+\mat{A}(x)\,\dd x)=\mat{I} + \sum_{n=1}^\infty \int_s^t\int_s^{x_n}\cdots \int_s^{x_{2}}\mat{A}(x_1)\mat{A}(x_2)\cdots \mat{A}(x_n)\,\dd x_1\,\dd x_2\cdots \,\dd x_n , \label{eq:Peano-Baker} .
\end{equation} 
 The product integral exists if $\mat{A}(x)$ is Riemann integrable, which will henceforth be assumed. From the Peano--Baker representation one may prove that
\begin{eqnarray}
\prod_s^t (\mat{I}+\mat{A}(x)\,\dd x)&=&\prod_s^x (\mat{I}+\mat{A}(u)\,\dd u)\prod_x^t (\mat{I}+\mat{A}(u)\,\dd u)   \label{eq:product-rule}
\end{eqnarray}
holds true for any order of $s,t$ and $x$ (not only $s\leq x\leq t$). In particular, the inverse of a product integral then exists and is given by
 \begin{equation}
  \left( \prod_s^t (\mat{I}+\mat{A}(x)\,\dd x)\right)^{-1} =\prod_t^s (\mat{I}+\mat{A}(x)\,\dd x) \label{eq:inverse-prod-int}  .
\end{equation} 
If the matrices $\mat{A}(x)$ all commute, then
\begin{equation}
 \prod_s^t (\mat{I}+\mat{A}(x)\,\dd x) =\exp \left( \int_s^t \mat{A}(x)\,\dd x  \right) .\label{eq:prod-int-commute}
\end{equation}
In particular, if $\mat{A}(x)=\mat{A}$ for all $x$, then
\begin{equation}
 \prod_s^t (\mat{I}+\mat{A}(x)\,\dd x) =\exp \left( \mat{A}(t-s) \right) . \label{eq:prod-int-constant}
\end{equation}
This last observation may be useful in connection with piecewise constant matrices $\mat{A}(x)$
\[  \mat{A}(x) = \mat{A}_i,\ \ \ x_{i-1}\leq x \leq x_i   \]
for $i=1,2,...$ and where $x_0=0$. Then, using \eqref{eq:product-rule}, 
 \begin{equation}
 \prod_s^t (\mat{I}+\mat{A}(x)\,\dd x) = \exp \left( \mat{A}_i(x_i-s) \right) \left( 
 \prod_{k=i+1}^{j-1} \exp \left( \mat{A}_k(x_k-x_{k-1}) \right) \right) \exp \left( \mat{A}_{j}(t-x_{j-1}) \right) 
\label{eq:piecewise-constant}
 \end{equation}
 , 
 where $i$ and $j$ are such that $s\in [x_{i-1},x_{i}]$ and $t\in [x_{j-1},x_{j}]$. 

 Matrix--exponentials can be calculated in numerous ways (see \cite{Moler:1978vp} and \cite{Loan:2003un}) and are typically available in standard software package though at varying level of sophistication. If the exponent is an intensity (or sub--intensity matrix, i.e. row sums are non--positive) then either a Runge--Kutta method or uniformisation (see e.g.\  \cite{bladt2017matrix}, p.\, 51) are competitive and among the most efficient.

Product integrals may also be used in the construction of time--inhomogeneous Markov processes if the 
primitive is the intensity matrices. Indeed, if $\mat{A}(x)$ are intensity matrices (i.e. off diagonal elements are non--negative and rows sum to 0), then their product integrals are transition matrices, and by \eqref{eq:product-rule}, Chapman--Kolmogorov' equations are then satisfied which implies the Markov property. For further details we refer to \cite{johansen-1986}. 

\section{The general model}\label{sec:model}
Consider a
time--in\-ho\-mo\-geneous Markov process $\{ Z(t)\}_{t\geq 0}$ with state space $E=\{1,2,...,p\}$ and intensity matrices $\mat{M}(t)=\{ \mu_{ij}(t)\}_{i,j\in E}$. Let $\mat{P}(s,t) = \{ p_{ij}(s,t) \}_{i,j=1,..,p}$ denote the corresponding transition matrix. Assume that 
\[ \mat{M}(s) = \mat{C}(s)+\mat{D}(s) , \]
where $\mat{D}(s)=\{ d_{ij}(s)\}_{i,j\in E}$ denote a $p\times p$ matrix with $d_{ij}(s)\geq 0$ and $\mat{C}(s)=\{ c_{ij}(s)\}_{i,j\in E}$ is a sub--intensity matrix, i.e. its rows sums are non--positive. We define a reward structure on the Markov process in the following way. At jumps from $i$ to $j$, lump sums of $b^{ij}(s)$ are obtained with probability $d_{ij}(s)/(d_{ij}(s)+c_{ij}(s))$. When $Z(s)=i$ there may be two kind of rewards: a continuous rate of $b^i(s)$, so that $b^i(s)\,\dd s$ is earned during $[s,s+\dd s)$, and lump sums of $b^{ii}(s)$ at the events of a Poisson process with rate $d_{ii}(s)$  while in state $i$. The total reward obtained during $[s,t]$ is then given by
\begin{align}
    R^T(s,t) \ &=\ \sum_i \bar{R}^i(s,t) + \sum_{i\neq j} \bar{N}^{ij}(s,t) \label{eq:def:total-reward} \\
\intertext{where}   
 \bar{N}^{ij}(s,t)\  &=\ \int_s^t b^{ij}(x)\,\dd N^{ij}(x) \label{def:Nbar}, \\
\bar{R}^i(s,t)\ &=\ \int_s^t b^i(x)1\{ Z(x)=i\}\,\dd x \label{def:Rbar}
\end{align}
and where $N^{ij}(x)$ is the counting process which increases by $+1$ upon transition from $i$ to $j$ in $Z(t)$ for $i\neq j$, or a Poisson process with rate $d_{ii}(t)$ if $i=j$.

Our principal application is to life insurance, where the states $i\in E$ are the different conditions of an insured individual (e.g. active, unemployed, disabled or dead). Here we are interested in studying the discounted rewards, $U(s,t)$, during a time interval $[s,t]$ defined by
\[  U(s,t) = \int_s^t \e^{-\int_s^u r(x)\,\dd x} \,\dd B(u)  , \]
where $r(x)$ is a deterministic (instantaneous) interest rate at time $x$ and $B$ is a payment process
\begin{eqnarray*}
\dd B (t)&=&b^{Z(t)}(t)\,\dd t + \sum_{j=1}^p b^{Z(t-)j}(t)\,\dd N^{Z(t-)j}(t) . \label{def:payment-process}
\end{eqnarray*}
Here the continuous rates may e.g.\ be premiums (negative) or compensations (periodic unemployment payments). Lump sums $b^{ij}(t)$ will be paid out at transitions $i$ to $j$ at time $t$ with probability $d_{ij}(t)/(d_{ij}(t)+c_{ij}(t))$, while other lump sums of $b^{ii}(t)$ will be paid out while the insured is in condition $i$ at random times that will appear according to a time--inhomogeneous Poisson process at rate $d_{ii}(t)$.
\begin{remark}
If $c_{ij}(s)=0$ for all $i\neq j$ and $d_{ii}(s)=0$ for all $i$, then we recover the standard multi--state Markov model in life insurance as in e.g. \cite{Hoem1969} or \cite{norberg1991}.
\end{remark}




We are interested in calculating the moments of $R^T(s,t)$ and more generally of $U(s,t)$. To this end we define the slightly more general quantities
\begin{equation}
m_{ij}^{(k)}\  =\  \Exp\Bigl[1\{ Z(t)=j \} R^T(s,t)^k \,\Big|\, Z(s)=i \Bigr]
\end{equation}
and
\begin{equation}
  \mat{m}^{(k)}(s,t) = \big\{m_{ij}^{(k)} \big\}_{i,j\in E}   \label{eq:def-m}
\end{equation}
and more generally,
 \begin{equation}
   v_{ij}^{(k)}(s,t)\ =\ \Exp\Bigl[ 1\{ Z(t)=j\} U(s,t)^k  \,\Big|\, Z(s)=i  \Bigr] \label{eq:moments-future-payments}
\end{equation}
and 
 \begin{equation}
  \mat{V}^{(k)}(s,t)\ =\ \bigl\{  v_{ij}^{(k)}(s,t) \bigr\}_{i,j\in E}   \label{eq:moments-future-payments-matrix}
  \end{equation}
for $k\in \mathbb{N}$.
Define
\begin{eqnarray}
 \mat{B}(t) &=& \{ b^{ij}(t) \}_{i,j\in E} \\
 \vect{b}(t)&=&(b^1(t),...,b^{p}(t))^\prime \\
 \mat{R}(t)&=&\mat{D}(t)\bullet \mat{B}(t) + \mat{\Delta}(\vect{b}(t)) \\
 \mat{C}^{(k)}(t)&=&\mat{D}(t)\bullet \mat{B}^{\bullet k}(t),\ \   k\geq 2 \label{def-symbol:C}
\end{eqnarray}
where $\mat{\Delta}(\vect{b}(t))$ denotes the diagonal matrix with the vector $\vect{b}(t)$ as diagonal, $\bullet$
is the Schur (entrywise) matrix product, i.e. $\{ a_{ij} \}\bullet \{ b_{ij} \} = \{ a_{ij}b_{ij} \}$ and $\mat{B}^{\bullet k}(t) = \mat{B}(t)\bullet \cdots \bullet \mat{B}(t)$ ($n$ terms).

The state--wise prospective reserves are defined as $\Exp [U(s,t)|Z(s)=i]$ for all $i\in E$, which are then the elements of the vector $\mat{V}^{(1)}(s,t)\vect{e}$,  where
$\vect{e}$ is the column vector of ones (see \eqref{eq:moments-future-payments-matrix}). We shall say that the matrix $\mat{V}^{(1)}(s,t)$ contains the {\it partial (state--wise prospective) reserves} and the refer to matrix itself as such. Though the partial reserve may have its own merit, it is introduced primarily for mathematical convenience. 

\section{Partial reserves and Thiele's differential equations}\label{sec:Thiele}
First we start with an integral representation of the first order moment $ \mat{m}^{(1)}(s,t)$.
\begin{lemma}\label{lemma:rewards}
$\displaystyle \mat{m}^{(1)}(s,t) \ =\  \int_s^t \mat{P}(s,u)\mat{R}(u)\mat{P}(u,t)\,\dd u $\,.
\end{lemma}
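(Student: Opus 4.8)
\emph{Sketch of the intended proof.} The plan is to evaluate each entry $m_{ij}^{(1)}(s,t)=\Exp\bigl[1\{Z(t)=j\}R^T(s,t)\,\big|\,Z(s)=i\bigr]$ by linearity of expectation, writing $R^T(s,t)$ as the sum of its continuously accruing part $\sum_\ell\bar R^\ell(s,t)$ and its lump--sum part $\sum_{k,\ell}\bar N^{k\ell}(s,t)$ (the diagonal terms $\bar N^{kk}$ carrying the time--inhomogeneous Poisson lump sums of size $b^{kk}$), and then handling each piece by Fubini's theorem together with the Markov property applied at a generic intermediate time $x\in[s,t]$.

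For the sojourn rewards, Tonelli/Fubini gives
\[
\Exp\bigl[1\{Z(t)=j\}\bar R^\ell(s,t)\,\big|\,Z(s)=i\bigr]=\int_s^t b^\ell(x)\,\Exp\bigl[1\{Z(x)=\ell\}1\{Z(t)=j\}\,\big|\,Z(s)=i\bigr]\,\dd x ,
\]
and the Markov property factors the inner expectation as $p_{i\ell}(s,x)\,p_{\ell j}(x,t)$; summing over $\ell$ yields $\int_s^t\mat{P}(s,x)\,\mat{\Delta}(\vect b(x))\,\mat{P}(x,t)\,\dd x$, which is exactly the $\mat{\Delta}(\vect b)$ contribution to $\mat{R}$.

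The lump--sum part is the heart of the matter. Fix $k,\ell$. Since a $k\to\ell$ transition occurs at rate $\mu_{k\ell}(x)=c_{k\ell}(x)+d_{k\ell}(x)$ when $Z(x-)=k$ but carries its reward only with probability $d_{k\ell}(x)/\mu_{k\ell}(x)$, the effective reward--bearing count has $\{\mathcal F^Z_x\}$--compensator $\int_s^{\cdot}d_{k\ell}(x)\,1\{Z(x)=k\}\,\dd x$ (for $k=\ell$ this is just the Poisson rate $d_{kk}$); one cannot, however, directly invoke the mean--zero property of the compensated integral, because $1\{Z(t)=j\}$ is not $\mathcal F^Z_x$--measurable at the jump times. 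I would get around this by conditioning jump by jump: at a jump of $N^{k\ell}$ at time $x$ one has $Z(x)=\ell$, hence $\Exp[1\{Z(t)=j\}\mid\mathcal F^Z_x]=p_{\ell j}(x,t)$, so the tower property (a predictable projection) replaces the non--adapted factor $1\{Z(t)=j\}$ inside $\int_{(s,t]}b^{k\ell}(x)\,\dd N^{k\ell}(x)$ by the deterministic integrand $p_{\ell j}(x,t)$; replacing $\dd N^{k\ell}(x)$ by its compensator and taking expectations then gives $\int_s^t p_{ik}(s,x)\,d_{k\ell}(x)\,b^{k\ell}(x)\,p_{\ell j}(x,t)\,\dd x$. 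Summing over $k,\ell$ produces $\int_s^t\mat{P}(s,x)\,\bigl(\mat{D}(x)\bullet\mat{B}(x)\bigr)\,\mat{P}(x,t)\,\dd x$, and adding the sojourn contribution gives $m_{ij}^{(1)}(s,t)=\int_s^t[\mat{P}(s,x)\,\mat{R}(x)\,\mat{P}(x,t)]_{ij}\,\dd x$ with $\mat{R}=\mat{\Delta}(\vect b)+\mat{D}\bullet\mat{B}$, as claimed. An equivalent and more elementary derivation of the same integrand is infinitesimal: the expected reward from $k\to\ell$ transitions in $(x,x+h]$, conditioned on $Z(s)=i$ and $Z(t)=j$, equals $p_{ik}(s,x)\cdot d_{k\ell}(x)h\cdot b^{k\ell}(x)\cdot p_{\ell j}(x,t)+o(h)$, two jumps in $(x,x+h]$ having probability $o(h)$. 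The main obstacle I anticipate is precisely this handling of the non--adapted factor $1\{Z(t)=j\}$ against the jump measure $N^{k\ell}$, together with the bookkeeping that keeps the transition intensity $\mu_{k\ell}$ separate from the reward intensity $d_{k\ell}$ — which is exactly what makes $\mat{D}$, and not $\mat{M}$, appear in $\mat{R}$.
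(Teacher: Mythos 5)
Your proposal is correct and follows essentially the same route as the paper: the identical three-way decomposition into sojourn rewards, transition lump sums and Poisson lump sums, Fubini plus the Markov property at the intermediate time for the continuous part, and the integrand $p_{ik}(s,x)\,d_{k\ell}(x)\,b^{k\ell}(x)\,p_{\ell j}(x,t)$ for the jump part. The only difference is presentational: where the paper argues heuristically that $p_{ik}(s,u)\mu_{k\ell}(u)\,p_{\ell j}(u,t)\,\dd u$ ``serves as the density for a jump at $u$'' and then thins by $d_{k\ell}/\mu_{k\ell}$, you formalize the same step via the compensator of the thinned counting process and a predictable projection to dispose of the non-adapted factor $1\{Z(t)=j\}$ --- and you note the paper's infinitesimal version yourself as the elementary equivalent.
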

\begin{proof}
The probability that there is a jump from $k$ to $\ell$ in $[u,u+\dd u)$ given that $Z(s)=i$ is
\begin{eqnarray*}
p_{ik}(s,u)\mu_{k\ell}(u)\,\dd u &=& 
\vect{e}_i^\prime \mat{P}(s,u)\vect{e}_k \mu_{k\ell}(u)\,\dd u . 
\end{eqnarray*}
Hence, the probability that there is a jump from $k$ to $\ell$ in $[u,u+\dd u)$ and that $Z(t)=j$, given that $Z(s)=i$, is
consequently 
\begin{eqnarray*}
p_{ik}(s,u)\mu_{k\ell}(u)\,\dd u p_{\ell j}(u,t) &=& 
\vect{e}_i^\prime \mat{P}(s,u)\vect{e}_k \mu_{k\ell}(u)\,\dd u \vect{e}_\ell^\prime \mat{P}(u,t) \vect{e}_j. 
\end{eqnarray*}
This serves as the density for a jump at $u$. The reward $b^{k\ell}(u)$ is the value to be paid out with probability
\[  \frac{d_{k\ell}(u)}{d_{k\ell}(u)+c_{k\ell}(u)} = \frac{d_{k\ell}(u)}{\mu_{k\ell}(u)} . \]
Hence the expected reward in $[s,t]$ due to jumps from $k$ to $\ell$ amount to
\[  \int_s^t \vect{e}_i^\prime \mat{P}(s,u)\vect{e}_k \mu_{k\ell}(u) \frac{d_{k\ell}(u)}{\mu_{k\ell}(u)} b^{k\ell}(u)  \vect{e}_\ell^\prime  \mat{P}(u,t) \vect{e}_j \,\dd u  = \int_s^t \vect{e}_i^\prime \mat{P}(s,u)\vect{e}_k d_{k\ell}(u)  b^{k\ell}(u)  \vect{e}_\ell^\prime  \mat{P}(u,t) \vect{e}_j \,\dd u . \]
This settles the case for the jumps. Concerning the linear rewards, consider state $k$. Here its contribution amounts to
\begin{eqnarray*}
  \lefteqn{\Exp\left[ \int_s^t\left. 1\{ Z(t)=j \} b^k(u)1\{ Z(u)=k \}\,\dd s \right| Z(s)=i \right]}~~~~~~~~~~~~~~~~~~\\
  &=& 
  \int_s^t b^{k}(u) \Prob (Z(t)=j, Z(u)=k | Z(s)=i)\,\dd s \\
  &=& \int_s^t  b^{k}(u)\vect{e}_i^\prime \mat{P}(s,u)\vect{e}_k  \vect{e}_k^\prime \mat{P}(u,t)\vect{e}_j \,\dd s .
  \end{eqnarray*} 
For the case of lump sums from Poisson arrivals in state $k$, the contribution is
 \[ \int_s^t \vect{e}_i^\prime \mat{P}(s,u)\vect{e}_k d_{kk}(u)  b^{k\ell}(u)  \vect{e}_k^\prime  \mat{P}(u,t) \vect{e}_j \,\dd u .\] 
The total reward is then obtained by summing over all three different types of reward, which in matrix notation exactly yields the result.
\end{proof}
Next we consider the partial reserve. For a fixed time horizon (the terminal date) $T$ we define
 \begin{equation}
  \mat{V}(t) = \mat{V}^{(1)}(t,T) .  \label{def:partial-reserve-1}
\end{equation}
From Lemma \ref{lemma:rewards} we  have that
\[  \Exp \bigl[\dd B(u)1\{ Z(T)=j \} \,\big|\, Z(t)=i\bigr]\ =\ \vect{e}_i^\prime \mat{P}(t,u)\mat{R}(u)\mat{P}(u,T)\vect{e}_j \,\dd u .\]
We denote the elements of $\mat{V}(t)$ by $v_{ij}(t)$. Then we have the following Thiele type of differential equation for the partial reserve.
\begin{theorem}[Thiele]\label{th:thiele-partial}
$\mat{V}(t)=\mat{V}^{(1)}(t,T)$ satisfies
\[ \frac{\partial}{\partial t}  \mat{V}(t) = r(t) \vect{V}(t)-\mat{M}(t) \vect{V}(t) - \mat{R}(t)\mat{P}(t,T)  \]
with terminal condition $\mat{V}(T)=\mat{0}$.
\end{theorem}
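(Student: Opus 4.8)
The plan is to first derive a closed integral representation for $\mat V(t)$ and then differentiate it term by term. Starting from the definition $v_{ij}(t)=\Exp[1\{Z(T)=j\}U(t,T)\mid Z(t)=i]$ with $U(t,T)=\int_t^T\e^{-\int_t^u r(x)\,\dd x}\,\dd B(u)$, I would interchange expectation and integration (Fubini, legitimate since the integrands are bounded on the compact interval $[t,T]$ and the intensities and payment rates are Riemann integrable) and insert the increment identity stated just before the theorem, namely $\Exp[\dd B(u)1\{Z(T)=j\}\mid Z(t)=i]=\vect e_i'\mat P(t,u)\mat R(u)\mat P(u,T)\vect e_j\,\dd u$, which is itself the discounted analogue of Lemma \ref{lemma:rewards}. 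This yields
\[ \mat V(t)\ =\ \int_t^T \e^{-\int_t^u r(x)\,\dd x}\,\mat P(t,u)\mat R(u)\mat P(u,T)\,\dd u . \]
The terminal condition $\mat V(T)=\mat 0$ is then immediate, the integral being over the degenerate interval.

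Next I would differentiate this expression in $t$. There are exactly three sources of $t$-dependence: the lower limit of integration, the scalar discount factor $\e^{-\int_t^u r(x)\,\dd x}$, and the transition matrix $\mat P(t,u)$. Differentiating the lower limit via the Leibniz rule produces $-\e^{-\int_t^t r}\mat P(t,t)\mat R(t)\mat P(t,T)=-\mat R(t)\mat P(t,T)$, using $\mat P(t,t)=\mat I$. Differentiating the discount factor gives $\frac{\partial}{\partial t}\e^{-\int_t^u r(x)\,\dd x}=r(t)\,\e^{-\int_t^u r(x)\,\dd x}$ (derivative with respect to a lower limit), which contributes $r(t)\mat V(t)$ after pulling the scalar out of the integral. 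For the transition matrix I would use the Kolmogorov-type equation recalled in Section \ref{sec:background}, applied to the product integral $\mat P(t,u)=\prod_t^u(\mat I+\mat M(x)\,\dd x)$, which gives $\frac{\partial}{\partial t}\mat P(t,u)=-\mat M(t)\mat P(t,u)$; since $\mat M(t)$ does not depend on $u$ it factors out, contributing $-\mat M(t)\mat V(t)$. Collecting the three terms gives exactly $\frac{\partial}{\partial t}\mat V(t)=r(t)\mat V(t)-\mat M(t)\mat V(t)-\mat R(t)\mat P(t,T)$, as claimed.

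The only real subtlety is the justification of differentiation under the integral sign and of the Leibniz rule for the moving endpoint: this needs $u\mapsto\mat P(t,u)\mat R(u)\mat P(u,T)$ to be continuous (or at least to satisfy the dominated-convergence hypotheses), which is guaranteed by Riemann integrability of $\mat M$ and $\mat R$ together with continuity of the product integral in its time arguments. Apart from that the argument is a mechanical application of calculus, and the only place requiring care is the bookkeeping of the three $t$-dependencies so that no term or sign is dropped. As an alternative to the integral-representation route one could give a purely probabilistic proof by conditioning on the behaviour of $Z$ on $[t,t+\dd t]$ (no jump, a jump to some $j\neq i$, or a Poisson-type lump sum), writing $U(t,T)=\dd B(t)+\e^{-r(t)\,\dd t}U(t+\dd t,T)$, taking conditional expectations, and passing to the limit $\dd t\to0$; this reproduces the same equation and matches the "probabilistic argument" advertised in the introduction, but it requires the same care with the infinitesimal transition probabilities.
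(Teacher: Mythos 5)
Your proof is correct and follows essentially the same route as the paper: both start from the integral representation $\mat V(t)=\int_t^T \e^{-\int_t^u r(x)\,\dd x}\,\mat P(t,u)\mat R(u)\mat P(u,T)\,\dd u$ obtained from the expected-payment-increment identity (Lemma \ref{lemma:rewards}), and then differentiate in $t$ using the Leibniz rule for the moving lower limit together with the backward Kolmogorov equation. The only immaterial difference is that the paper first absorbs the discount factor into the product integral, writing $\e^{-\int_t^u r(s)\,\dd s}\,\mat P(t,u)=\prod_t^u(\mat I+(\mat M(s)-r(s)\mat I)\,\dd s)$ and differentiating that single object, whereas you differentiate the scalar factor and the transition matrix separately; the resulting three terms are identical.
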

\begin{proof}
First we see that
\begin{eqnarray}
  v_{ij}(t) &=&\int_t^T \e^{-\int_s^u r(u)\,\dd u} \Exp \bigl[1\{ Z(T)=j \} \,\dd B(u) \,\big|\,  Z(t)=i \bigr] \nonumber \\
  &=&\int_t^T \e^{-\int_t^u r(s)\,\dd s} \vect{e}_i^\prime \mat{P}(t,u)\mat{R}(u)\mat{P}(u,T)\vect{e}_j \,\dd u\nonumber \\
  &=&\vect{e}_i^\prime \int_t^T \prod_t^u (1 -r(s)\,\dd s) \prod_t^u (\mat{I} + \mat{M}(s)\,\dd s) \mat{R}(u) \prod_u^T (\mat{I} + \mat{M}(s)\,\dd s))\,\dd u \vect{e}_j \nonumber\\
  &=& \vect{e}_i^\prime \int_t^T  \prod_t^u (\mat{I} + (\mat{M}(s)-r(s)\mat{I})\,\dd s) \mat{R}(u) \prod_u^T (\mat{I} + \mat{M}(s)\,\dd s))\,\dd u \vect{e}_j . \nonumber
\end{eqnarray} 
In matrix notation,
\begin{equation}
 \mat{V}(t) = \int_t^T  \prod_t^u (\mat{I} + (\mat{M}(s)-r(s)\mat{I})\,\dd s) \mat{R}(u) \prod_u^T (\mat{I} + \mat{M}(s)\,\dd s))\,\dd u . \label{eq:extend-reserve-int-rep}
 \end{equation}
Thus
\begin{eqnarray*}
\lefteqn{\frac{\partial}{\partial t}  \mat{V}(t)=\frac{\partial}{\partial t} \int_t^T  \prod_t^u (\mat{I} + (\mat{M}(s)-r(s)\mat{I})\,\dd s) \mat{R}(u) \prod_u^T (\mat{I} + \mat{M}(s)\,\dd s))\,\dd u }~~~~\\
&=& \int_t^T \left[\frac{\partial}{\partial t} \prod_t^u (\mat{I} + (\mat{M}(s)-r(s)\mat{I})\,\dd s)\right] \mat{R}(u) \prod_u^T (\mat{I} + \mat{M}(s)\,\dd s))\,\dd u \\
&&- \mat{I}\cdot \mat{R}(t) \prod_t^T (\mat{I} + \mat{M}(s)\,\dd s)) \\
&=& - (\mat{M}(t)-r(t)\mat{I})) \int_t^T  \prod_t^u (\mat{I} + (\mat{M}(s)-r(s)\mat{I})\,\dd s) \mat{R}(u) \prod_u^T (\mat{I} + \mat{M}(s)\,\dd s))\,\dd u \\
&&- \mat{R}(t) \prod_t^T (\mat{I} + \mat{M}(s)\,\dd s)) \\
&=&- (\mat{M}(t)-r(t)\mat{I})) \vect{V}(t) - \mat{R}(t) \prod_t^T (\mat{I} + \mat{M}(s)\,\dd s)) \\
&=&- (\mat{M}(t)-r(t)\mat{I})) \vect{V}(t) -  \mat{R}(t)\mat{P}(t,T) ,
\end{eqnarray*}
i.e.
\begin{equation}
\frac{\partial}{\partial t}  \mat{V}(t) =r(t)\vect{V}(t)- \mat{M}(t)\vect{V}(t) -  \mat{R}(t)\mat{P}(t,T),
 \label{eq:ext-thiele}
\end{equation} 
with obvious boundary condition $\mat{V}(T)=\mat{0}$.
\end{proof}
As an immediate consequence, using that $\mat{P}(t,T)\vect{e} =\vect{e}$
being a transition probability matrix, we recover the usual Thiele differential equation.
\begin{corollary}
The vector of prospective reserves $ \vect{V}_{th}(t)=\mat{V}^{(1)}(t,T)\vect{e}$ satisfies 
\begin{eqnarray*}
\frac{\partial}{\partial t}\vect{V}_{th}(t)
&=&r(t) \vect{V}_{th}(t) - \mat{M}(t) \vect{V}_{th}(t) - \mat{R}(t)\vect{e}, \label{eq:thiele}
\end{eqnarray*}
with terminal condition $ \vect{V}_{th}(T)=\vect{0}$.
\end{corollary}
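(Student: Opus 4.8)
The plan is to derive the corollary directly from Theorem~\ref{th:thiele-partial} by right--multiplying the matrix equation with the all--ones column vector $\vect{e}$. Since $\vect{V}_{th}(t)=\mat{V}(t)\vect{e}$ and $\vect{e}$ is constant in $t$, differentiation in $t$ commutes with this multiplication, so $\frac{\partial}{\partial t}\vect{V}_{th}(t)=\bigl(\frac{\partial}{\partial t}\mat{V}(t)\bigr)\vect{e}$. Substituting the matrix Thiele equation yields
\[
\frac{\partial}{\partial t}\vect{V}_{th}(t)=r(t)\,\mat{V}(t)\vect{e}-\mat{M}(t)\,\mat{V}(t)\vect{e}-\mat{R}(t)\,\mat{P}(t,T)\vect{e}.
\]

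The one ingredient that needs to be invoked is the identity $\mat{P}(t,T)\vect{e}=\vect{e}$, which is already used (and noted) in the text. It holds because $\mat{P}(t,T)$ is the transition matrix of $\{Z(u)\}$, hence a stochastic matrix with unit row sums; equivalently, from the representation $\mat{P}(t,T)=\prod_t^T(\mat{I}+\mat{M}(s)\,\dd s)$ and the fact that $\mat{M}(s)\vect{e}=\vect{0}$ for an intensity matrix, $\vect{e}$ is a fixed point of the corresponding Kolmogorov equation. With this, the last term collapses to $\mat{R}(t)\vect{e}$ and the first two terms combine using $\mat{V}(t)\vect{e}=\vect{V}_{th}(t)$, giving exactly the asserted equation. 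The terminal condition follows at once from $\mat{V}(T)=\mat{0}$, namely $\vect{V}_{th}(T)=\mat{V}(T)\vect{e}=\vect{0}$.

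I do not anticipate any real obstacle: all the substance lives in Theorem~\ref{th:thiele-partial}, and the corollary merely projects that partial (terminal--state--resolved) statement onto $\vect{e}$, summing out the indicator $1\{Z(T)=j\}$ and thereby recovering the classical vector--valued Thiele differential equation. The only point worth stating carefully is precisely why the discounting/drift term $-\mat{R}(t)\mat{P}(t,T)$ simplifies to $-\mat{R}(t)\vect{e}$ after multiplication by $\vect{e}$, i.e.\ the row--sum property of $\mat{P}(t,T)$.
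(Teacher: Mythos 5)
Your proposal is correct and follows exactly the route the paper intends: right--multiply the matrix Thiele equation of Theorem~\ref{th:thiele-partial} by $\vect{e}$ and use the row--sum identity $\mat{P}(t,T)\vect{e}=\vect{e}$, which is precisely the one-line justification the paper gives for this corollary. Nothing is missing.
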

\section{Matrix representation of the reserve}\label{sec:matrix-reserves}
In this section we will provide a matrix representation of the reserve. We start with an important general result which extends a result by \cite{VanLoan:1978tq} from matrix--exponentials to product integrals. Here
the matrices $\mat{A}(x)$ and $\mat{C}(x)$ must be square but not necessarily of the same dimension.
\begin{lemma}\label{lemma:van-loan}
For matrix functions $\mat{A}(x)$, $\mat{B}(x)$ and $\mat{C}(x)$, we have that
\[  \prod_s^t \left(  \mat{I} + 
\begin{pmatrix}
\mat{A}(x) & \mat{B}(x) \\
\mat{0} & \mat{C}(x)
\end{pmatrix} \,\dd x
\right) 
=
\begin{pmatrix}
\displaystyle\prod_s^t (\mat{I}+\mat{A}(x)\,\dd x) & \displaystyle \int_s^t \prod_s^u (\mat{I}+\mat{A}(x)\,\dd x)\mat{B}(u)\prod_u^t (\mat{I}+\mat{C}(x)\,\dd x)\,\dd u \\
\mat{0} & \displaystyle\prod_s^t (\mat{I}+\mat{C}(x)\,\dd x)
\end{pmatrix} .
 \]
 
\end{lemma}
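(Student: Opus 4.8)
The plan is to show that the block matrix on the right-hand side, call it $\mat{H}(s,t)$, solves the same Kolmogorov forward equation and has the same terminal value as the product integral on the left, and then to invoke uniqueness of solutions of this linear matrix ODE. Write $\mat{G}(x)=\begin{pmatrix}\mat{A}(x)&\mat{B}(x)\\ \mat{0}&\mat{C}(x)\end{pmatrix}$ and denote the blocks of $\mat{H}(s,t)$ by $\mat{H}_{11}=\prod_s^t(\mat{I}+\mat{A}(x)\,\dd x)$, $\mat{H}_{22}=\prod_s^t(\mat{I}+\mat{C}(x)\,\dd x)$, $\mat{H}_{21}=\mat{0}$, and $\mat{H}_{12}(s,t)=\int_s^t \prod_s^u(\mat{I}+\mat{A}(x)\,\dd x)\,\mat{B}(u)\,\prod_u^t(\mat{I}+\mat{C}(x)\,\dd x)\,\dd u$ (the identities in the diagonal blocks being of the appropriate sizes). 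The terminal condition is immediate: $\mat{H}_{11}(t,t)=\mat{I}$ and $\mat{H}_{22}(t,t)=\mat{I}$ since a product integral over an empty interval is the identity, and $\mat{H}_{12}(t,t)=\mat{0}$ since it is an integral over $[t,t]$; hence $\mat{H}(t,t)=\mat{I}=\prod_t^t(\mat{I}+\mat{G}(x)\,\dd x)$.

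Next I would differentiate $\mat{H}(s,t)$ in $s$, block by block. For the diagonal blocks this is just the defining Kolmogorov property of the product integral recalled in Section~\ref{sec:background}: $\partial_s\mat{H}_{11}(s,t)=-\mat{A}(s)\mat{H}_{11}(s,t)$ and $\partial_s\mat{H}_{22}(s,t)=-\mat{C}(s)\mat{H}_{22}(s,t)$. For the off-diagonal block I would apply the Leibniz rule: the lower-limit boundary term contributes $-\prod_s^s(\mat{I}+\mat{A}(x)\,\dd x)\,\mat{B}(s)\,\prod_s^t(\mat{I}+\mat{C}(x)\,\dd x)=-\mat{B}(s)\mat{H}_{22}(s,t)$, while differentiating the integrand in its parameter $s$ and using $\partial_s\prod_s^u(\mat{I}+\mat{A}(x)\,\dd x)=-\mat{A}(s)\prod_s^u(\mat{I}+\mat{A}(x)\,\dd x)$ (the factor $\mat{A}(s)$ being independent of the integration variable $u$, so it pulls out of the integral) contributes $-\mat{A}(s)\mat{H}_{12}(s,t)$. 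Altogether $\partial_s\mat{H}_{12}(s,t)=-\mat{A}(s)\mat{H}_{12}(s,t)-\mat{B}(s)\mat{H}_{22}(s,t)$.

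Assembling the four blocks, $\partial_s\mat{H}(s,t)=-\begin{pmatrix}\mat{A}(s)\mat{H}_{11}&\mat{A}(s)\mat{H}_{12}+\mat{B}(s)\mat{H}_{22}\\ \mat{0}&\mat{C}(s)\mat{H}_{22}\end{pmatrix}=-\mat{G}(s)\mat{H}(s,t)$, which is exactly the Kolmogorov forward equation characterizing $\prod_s^t(\mat{I}+\mat{G}(x)\,\dd x)$; with the matching terminal condition, uniqueness of the solution of this linear system forces the two sides to coincide. The one delicate point is the term-by-term differentiation of the integral defining $\mat{H}_{12}$ — justifying differentiation under the integral sign and the boundary term. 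Under the standing Riemann-integrability hypothesis on $\mat{A},\mat{B},\mat{C}$ the integrand and its $s$-derivative are bounded and depend continuously enough on their arguments for this to go through; if one prefers to avoid analytic technicalities altogether, an equivalent route is to expand $\prod_s^t(\mat{I}+\mat{G}(x)\,\dd x)$ directly through the Peano--Baker series \eqref{eq:Peano-Baker}: products of block-upper-triangular matrices stay block-upper-triangular, so the $(1,1)$ and $(2,2)$ blocks reproduce the Peano--Baker series of $\prod_s^t(\mat{I}+\mat{A}(x)\,\dd x)$ and $\prod_s^t(\mat{I}+\mat{C}(x)\,\dd x)$, and grouping the $(1,2)$ block according to the position of the unique $\mat{B}$ factor in each product and resumming the $\mat{A}$-prefix and $\mat{C}$-suffix yields precisely the stated integral.
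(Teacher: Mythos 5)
Your proof is correct and follows essentially the same route as the paper: both verify that the right-hand block matrix satisfies the Kolmogorov-type differential equation characterizing the product integral (with matching value on the diagonal $s=t$) and conclude by uniqueness. The only difference is that you differentiate in the lower limit $s$, obtaining $\partial_s\mat{H}=-\mat{G}(s)\mat{H}$, whereas the paper differentiates in the upper limit $t$, obtaining $\partial_t\mat{B}(s,t)=\mat{B}(s,t)\mat{G}(t)$ --- an immaterial variation.
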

\begin{proof}
First notice that
\begin{eqnarray*}\lefteqn{
\frac{\partial}{\partial t}\int_s^t \prod_s^u (\mat{I}+\mat{A}(x)\,\dd x)\mat{B}(u)\prod_u^t (\mat{I}+\mat{C}(x)\,\dd x)\,\dd u}~~~\\
&=&
\int_s^t \prod_s^u (\mat{I}+\mat{A}(x)\,\dd x)\mat{B}(u)\frac{\partial}{\partial t}\prod_u^t (\mat{I}+\mat{C}(x)\,\dd x)\,\dd u 
+  \prod_s^t (\mat{I}+\mat{A}(u))\mat{B}(t) \prod_t^t (\mat{I}+\mat{C}(x)\,\dd x) \\
&=& \int_s^t \prod_s^u (\mat{I}+\mat{A}(x)\,\dd x)\mat{B}(u)\prod_u^t (\mat{I}+\mat{C}(x)\,\dd x)\,\dd u\ \mat{C}(t) 
+ \prod_s^t (\mat{I}+\mat{A}(u))\mat{B}(t)
\end{eqnarray*}
Let $\mat{B}(s,t)$ denote the matrix on the right hand side in the Lemma. Then
\begin{eqnarray*}
\frac{\partial}{\partial t}\mat{B}(s,t)&=& 
\begin{pmatrix}
\displaystyle\frac{\partial}{\partial t} \prod_s^t (\mat{I}+\mat{A}(x)\,\dd x) & \displaystyle\frac{\partial}{\partial t}\int_s^t \prod_s^u (\mat{I}+\mat{A}(x)\,\dd x)\mat{B}(u)\prod_u^t (\mat{I}+\mat{C}(x)\,\dd x)\,\dd u \\
\mat{0} & \displaystyle\frac{\partial}{\partial t} \prod_s^t (\mat{I}+\mat{C}(x)\,\dd x) 
\end{pmatrix} \\
&=& \begin{pmatrix}
\displaystyle\prod_s^t (\mat{I}+\mat{A}(x)\,\dd x) & \displaystyle \int_s^t \prod_s^u (\mat{I}+\mat{A}(x)\,\dd x)\mat{B}(u)\prod_u^t (\mat{I}+\mat{C}(x)\,\dd x)\,\dd u \\
\mat{0} & \displaystyle\prod_s^t (\mat{I}+\mat{C}(x)\,\dd x)
\end{pmatrix}
\begin{pmatrix}
\mat{A}(t) & \mat{B}(t) \\
\mat{0} & \mat{C}(t)
\end{pmatrix} \\
&=& \mat{B}(s,t) \begin{pmatrix}
\mat{A}(t) & \mat{B}(t) \\
\mat{0} & \mat{C}(t)
\end{pmatrix} .
\end{eqnarray*}
Also, $\mat{B}(t,t)=\mat{I}$. Therefore, 
\[ \mat{B}(s,t) = \prod_s^t \left( \mat{I}+ \begin{pmatrix}
\mat{A}(u) & \mat{B}(u) \\
\mat{0} & \mat{C}(u)
\end{pmatrix}\,\dd u\right) . \]
\end{proof} 
\noindent Consider an $np\times np$ block matrix on the form
\[ \mat{A}(x) = \begin{pmatrix}
\mat{A}_{11}(x) & \mat{A}_{12}(x) & \cdots & \mat{A}_{1n}(x) \\
\mat{0} & \mat{A}_{22}(x) & \cdots & \mat{A}_{2n}(x) \\
\mat{0} & \mat{0} & \cdots & \mat{A}_{3n}(x) \\
\vdots & \vdots & \vdots\vdots\vdots & \vdots \\
 \mat{0} & \mat{0} & \cdots & \mat{A}_{nn}(x) 
\end{pmatrix}  \]
and write
\[ \mat{A}(x) =\prod_s^t (\mat{I} + \mat{A}(x)\,\dd x) 
 = \begin{pmatrix}
\mat{B}_{11}(s,t) & \mat{B}_{12}(s,t) & \cdots & \mat{B}_{1n}(s,t) \\
\mat{0} & \mat{B}_{22}(s,t) & \cdots & \mat{B}_{2n}(s,t) \\
\mat{0} & \mat{0} & \cdots & \mat{B}_{3n}(s,t) \\
\vdots & \vdots & \vdots\vdots\vdots & \vdots \\
 \mat{0} & \mat{0} & \cdots & \mat{B}_{nn}(s,t) 
\end{pmatrix} . \]
Then Lemma \ref{lemma:van-loan} implies that
  \begin{eqnarray*}
  \lefteqn{\big( \mat{B}_{12}(s,t),\mat{B}_{13}(s,t),...,\mat{B}_{1n}(s,t) \big)=}~~~\\
  && \int_s^t \prod_s^x (\mat{I}+\mat{A}_{11}(u)\,\dd u) \left[ \mat{A}_{12}(x),...,\mat{A}_{1n}(x) \right]
  \prod_x^t \left(\mat{I} + 
  \begin{pmatrix}
  \mat{A}_{22}(u) & \mat{A}_{23}(u) & \cdots & \mat{A}_{2n}(u) \\
  \mat{0}& \mat{A}_{33}(u) & \cdots & \mat{A}_{3n}(u) \\
  \mat{0} & \mat{0} & \cdots & \mat{A}_{4n}(u) \\
  \vdots & \vdots & \vdots\vdots\vdots & \vdots \\
   \mat{0} & \mat{0} & \cdots & \mat{A}_{nn}(u) 
  \end{pmatrix} \,\dd u
  \right)\end{eqnarray*}
  so that
  \[ \mat{B}_{1n}(s,t) = \int_s^t \prod_s^x (\mat{I}+\mat{A}_{11}(u)\,\dd u) \left[ \mat{A}_{12}(x),...,\mat{A}_{1n}(x) \right]
\begin{pmatrix}
\mat{B}_{2n}(x,t)\\
\mat{B}_{3n}(x,t)\\
\vdots \\
\mat{B}_{nn}(x,t)
\end{pmatrix}\ \,\dd x
   \]
 which can be written as
  \[ \mat{B}_{1n}(s,t) = \sum_{i=2}^n \int_s^t \prod_s^x (\mat{I}+\mat{A}_{11}(u)\,\dd u) \mat{A}_{1i}(x)\mat{B}_{in}(x,t)\,\dd x .\]
  Applying Lemma \ref{lemma:van-loan} to  \label{eq:extend-reserve} from downwards and up, we then get that
  \begin{eqnarray*}
  \mat{B}_{nn}(s,t)&=&\prod_s^t (\mat{I}+\mat{A}_{nn}(x)\,\dd x) \\
  \mat{B}_{n-1,n}(s,t)&=&\int_s^t \prod_s^x (\mat{I}+\mat{A}_{n-1,n-1}(u)\,\dd u)\mat{A}_{n-1,n}(x)\prod_x^t (\mat{I}+\mat{A}_{nn}(u)\,\dd u)\\
  &=&\int_s^t \prod_s^x (\mat{I}+\mat{A}_{n-1,n-1}(u)\,\dd u)\mat{A}_{n-1,n}(x)\mat{B}_{nn}(x,t)\,\dd x
  \end{eqnarray*}
  and more generally,
\begin{equation}
   \mat{B}_{i,n}(s,t) = \sum_{j=i+1}^n \int_s^t \prod_s^x (\mat{I}+\mat{A}_{ii}(u)\,\dd u)\mat{A}_{ij}(x)\mat{B}_{jn}(x,t)\,\dd x . \label{eq:recursion}
\end{equation}
\begin{theorem}
  The partial reserve defined in \eqref{def:partial-reserve-1}, $\mat{V}(t)$, (and the transition matrix) can be calculated through the product integral
  \begin{eqnarray*}
\prod_t^T \left( \mat{I} + 
\begin{pmatrix}
\mat{M}(u)-r(u)\mat{I} & \mat{R}(u) \\
\mat{0} & \mat{M}(u)
\end{pmatrix} \,\dd u
 \right)
 &=&\begin{pmatrix}
 \displaystyle\prod_t^T (\mat{I} + (\mat{M}(u)-r(u)\mat{I})\,\dd u) & \mat{V}(t) \\
 \mat{0} & \mat{P}(t,T) 
 \end{pmatrix}   .
\end{eqnarray*}
  \end{theorem}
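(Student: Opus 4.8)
The plan is to obtain the identity as an immediate specialization of Lemma~\ref{lemma:van-loan}. I apply that lemma with the $2\times 2$ block structure given by $\mat{A}(u)=\mat{M}(u)-r(u)\mat{I}$, $\mat{B}(u)=\mat{R}(u)$ and $\mat{C}(u)=\mat{M}(u)$, over the integration range from $t$ to $T$. The lemma then states that the product integral on the left-hand side equals the matrix whose $(1,1)$ block is $\prod_t^T(\mat{I}+(\mat{M}(u)-r(u)\mat{I})\,\dd u)$, whose $(2,1)$ block is $\mat{0}$, whose $(2,2)$ block is $\prod_t^T(\mat{I}+\mat{M}(u)\,\dd u)$, and whose $(1,2)$ block is
\[
\int_t^T \prod_t^u (\mat{I}+(\mat{M}(x)-r(x)\mat{I})\,\dd x)\,\mat{R}(u)\prod_u^T(\mat{I}+\mat{M}(x)\,\dd x)\,\dd u .
\]

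It then remains only to recognize the two nontrivial blocks. Since $\mat{M}(u)$ is an intensity matrix, its product integral from $t$ to $T$ is by construction the transition matrix $\mat{P}(t,T)$ of $\{Z(u)\}$, as recalled at the end of Section~\ref{sec:background}; this settles the $(2,2)$ block. For the $(1,2)$ block, the displayed expression coincides \emph{verbatim} with the integral representation \eqref{eq:extend-reserve-int-rep} of the partial reserve $\mat{V}(t)=\mat{V}^{(1)}(t,T)$ that was derived inside the proof of Theorem~\ref{th:thiele-partial}: there, Lemma~\ref{lemma:rewards} supplies $\mat{R}(u)$ flanked by transition matrices, and the discount factor enters as the scalar product integral $\prod_t^u(1-r(s)\,\dd s)=\e^{-\int_t^u r(s)\,\dd s}$, which commutes with everything and is absorbed into the left product integral, turning $\mat{M}$ into $\mat{M}-r\mat{I}$ in the first factor. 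Hence the $(1,2)$ block equals $\mat{V}(t)$, and the theorem follows.

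No step presents a genuine obstacle; the only points requiring care are purely bookkeeping: that the orientation of the inner product integrals $\prod_t^u$ and $\prod_u^T$ produced by Lemma~\ref{lemma:van-loan} matches that in \eqref{eq:extend-reserve-int-rep} (so the limits must be read in the order $t\to T$), and that incorporating the interest rate amounts precisely to the substitution $\mat{M}\mapsto\mat{M}-r\mat{I}$ in the upper-left block. Should one prefer to bypass \eqref{eq:extend-reserve-int-rep}, one could instead verify directly that the right-hand side, viewed as a function of $t$, satisfies the backward Kolmogorov system for the block intensity together with the terminal value $\mat{I}$ at $t=T$, with the $(1,2)$ block then being exactly the Thiele equation \eqref{eq:ext-thiele} with $\mat{V}(T)=\mat{0}$, and appeal to uniqueness of the product integral; but this merely repeats the computation already carried out in the proof of Lemma~\ref{lemma:van-loan}, so the direct specialization is the cleaner route.
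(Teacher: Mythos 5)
Your proposal is correct and follows essentially the same route as the paper: the paper's proof likewise applies Lemma~\ref{lemma:van-loan} with the blocks $\mat{M}(u)-r(u)\mat{I}$, $\mat{R}(u)$, $\mat{M}(u)$ and identifies the resulting $(1,2)$ block with the integral representation \eqref{eq:extend-reserve-int-rep} of $\mat{V}(t)$ and the $(2,2)$ block with $\mat{P}(t,T)$.
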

\begin{proof}
Applying Lemma \ref{lemma:van-loan} to \eqref{eq:extend-reserve-int-rep} we get that
\begin{eqnarray}
\lefteqn{
\prod_t^T \left( \mat{I} + 
\begin{pmatrix}
\mat{M}(u)-r(u)\mat{I} & \mat{R}(u) \\
\mat{0} & \mat{M}(u)
\end{pmatrix} \,\dd u
 \right)} \nonumber \\
 &=& 
 \begin{pmatrix}
 \displaystyle\prod_t^T (\mat{I} + (\mat{M}(u)-r(u)\mat{I})\,\dd u) & \displaystyle\int_t^T  \prod_t^u (\mat{I} + (\mat{M}(s)-r(s)\mat{I})\,\dd s) \mat{R}(u) \prod_u^T (\mat{I} + \mat{M}(s)\,\dd s))\,\dd u \\
 \mat{0} & \displaystyle\prod_t^T (\mat{I} + \mat{M}(u)\,\dd u) .
 \end{pmatrix}\nonumber \\
 &=&\begin{pmatrix}
 \displaystyle\prod_t^T (\mat{I} + (\mat{M}(u)-r(u)\mat{I})\,\dd u) & \mat{V}(t) \\
 \mat{0} & \mat{P}(t,T) .
 \end{pmatrix}  . \label{eq:Thiele-explicit-sol}
\end{eqnarray}\end{proof}
So both the partial reserve $\mat{V}(t)$ and $\mat{P}(t,T)=\{ p_{ij}(t,T)\}$ are calculated trough one single calculation of the product integral. This is convenient if we are interested in the calculating of the expected future payments conditional on $Z(t)=i$ and $Z(T)=j$, since
\[ \Exp \bigl[B(T)\,\big|\, Z(0)=i,Z(T)=j\bigr]\ =\ \frac{v_{ij}(t)}{p_{ij}(t,T)} . \] 

\section{Laplace transform of rewards and future payments}\label{sec:laplace-transform}
Recall the definition \eqref{eq:def:total-reward} of the total reward $R^T(s,t)$ which is the undiscounted future payments in an insurance context.  
Let
\[ F_{ij}(x;s,t) = \Prob (Z(t)=j,  R^T(s,t)\leq x \ | \ Z(s)=i)  \]
and
\[  F^*_{ij}(\theta;s,t) = \int_{-\infty}^\infty \e^{-\theta x}\,\dd F_{ij}(x;s,t)  \]
which is assumed to exist. Define 
\[   \mat{F}^*(\theta;s,t) =  \{  F^*_{ij}(\theta;s,t) \}_{i,j=1,...,p} .\]
\begin{theorem}\label{Th:4.5a}
 The distribution of the total reward $R^T(s,t)$ has Laplace--Stieltjes transform given by 
\[ \mat{F}^*(\theta;s,t) = \prod_{s}^t \left( \mat{I} + \left[ \mat{D}(u)\bullet \big\{ \e^{-\theta b^{k\ell}(u)} \big\}_{k,\ell}+\mat{C}(u) - \theta \mat{\Delta}(\vect{b}(u)) \right] \,\dd u \right) .  \]
 \end{theorem}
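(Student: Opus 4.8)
The plan is to exploit the additivity $R^T(s,t)=R^T(s,s+h)+R^T(s+h,t)$ together with the Markov property of $Z$ to obtain a multiplicative (Chapman--Kolmogorov type) identity for $\mat F^*$, and then to read off its infinitesimal generator. Write $\mat G(u;\theta)=\mat D(u)\bullet\big\{\e^{-\theta b^{k\ell}(u)}\big\}_{k,\ell}+\mat C(u)-\theta\mat{\Delta}(\vect b(u))$ for the matrix under the product integral. First I would observe that, conditionally on $Z(s+h)$, the pair $\bigl(R^T(s,s+h),Z(s+h)\bigr)$ is independent of $\bigl(R^T(s+h,t),Z(t)\bigr)$: the reward on $[s,s+h]$ is a functional of the path of $Z$ on $[s,s+h]$ together with the thinning/Poisson randomisation carried on that interval, the reward on $[s+h,t]$ is the analogous functional on $[s+h,t]$, the two path pieces are conditionally independent given $Z(s+h)$, and the randomisations on disjoint intervals are independent by construction. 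Conditioning $\Exp[\e^{-\theta R^T(s,t)}\mathbf 1\{Z(t)=j\}\mid Z(s)=i]$ on $Z(s+h)$ and using this factorisation yields the matrix identity $\mat F^*(\theta;s,t)=\mat F^*(\theta;s,s+h)\,\mat F^*(\theta;s+h,t)$.

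Next I would compute $\mat F^*(\theta;s,s+h)$ entrywise to first order in $h$. For $i\neq j$, reaching $j$ at time $s+h$ from $i$ forces at least one jump, so $\mathbf 1\{Z(s+h)=j\}$ occurs with probability $\mu_{ij}(s)h+o(h)$; on that event the accumulated continuous reward is $O(h)$ and hence contributes a factor $1+O(h)$ to $\e^{-\theta R^T}$, while the transition lump sum contributes $\Exp[\e^{-\theta(\mathrm{lump})}]=\frac{d_{ij}(s)}{\mu_{ij}(s)}\e^{-\theta b^{ij}(s)}+\frac{c_{ij}(s)}{\mu_{ij}(s)}$, so the $(i,j)$ entry equals $h\bigl(d_{ij}(s)\e^{-\theta b^{ij}(s)}+c_{ij}(s)\bigr)+o(h)=h\,[\mat G(s;\theta)]_{ij}+o(h)$. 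For $i=j$ one collects three contributions: no event and only continuous reward, with factor $\bigl(1-\sum_{k\neq i}\mu_{ik}(s)h-d_{ii}(s)h\bigr)\e^{-\theta b^i(s)h}$; a single Poisson lump in state $i$, with factor $d_{ii}(s)h\,\e^{-\theta b^{ii}(s)}$; and everything involving two or more events, which is $o(h)$. Expanding and using $c_{ii}(s)=\mu_{ii}(s)-d_{ii}(s)=-\sum_{k\neq i}\mu_{ik}(s)-d_{ii}(s)$ gives the $(i,i)$ entry as $1+h\bigl(d_{ii}(s)\e^{-\theta b^{ii}(s)}+c_{ii}(s)-\theta b^i(s)\bigr)+o(h)=1+h\,[\mat G(s;\theta)]_{ii}+o(h)$. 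Hence $\mat F^*(\theta;s,s+h)=\mat I+h\,\mat G(s;\theta)+o(h)$.

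Inserting this expansion into the multiplicative identity gives $\mat F^*(\theta;s,t)=\bigl(\mat I+h\,\mat G(s;\theta)+o(h)\bigr)\mat F^*(\theta;s+h,t)$; rearranging, dividing by $h$ and letting $h\downarrow 0$ yields $\frac{\partial}{\partial s}\mat F^*(\theta;s,t)=-\mat G(s;\theta)\mat F^*(\theta;s,t)$ with $\mat F^*(\theta;t,t)=\mat I$. By the characterisation of the product integral recalled in Section~\ref{sec:background} (it is the solution of the Kolmogorov equation $\frac{\partial}{\partial s}\mat F(s,t)=-\mat A(s)\mat F(s,t)$, $\mat F(t,t)=\mat I$), this is exactly $\mat F^*(\theta;s,t)=\prod_s^t(\mat I+\mat G(u;\theta)\,\dd u)$, which is the assertion.

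I expect the delicate part to be the $o(h)$ bookkeeping in the second step: one must check that the error terms are uniform in the starting state and, more to the point, that they accumulate consistently across $[s,t]$, which needs the standing regularity (Riemann integrability, effectively piecewise continuity of the intensities $\mu_{ij}$, the rates $b^i$ and the lump functions $b^{ij}$) so that $\mat G(\cdot;\theta)$ is Riemann integrable and the product integral is well defined; at discontinuities of the data the derivative in $s$ should be read as the appropriate one--sided derivative. A secondary technical point is that the (two--sided) Laplace--Stieltjes transforms involved are finite --- which is assumed in the statement --- and that expectation may be interchanged with the limit $h\downarrow 0$, which follows from dominated convergence since the relevant $\e^{-\theta(\cdot)}$ factors are bounded on the small interval.
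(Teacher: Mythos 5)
Your argument is correct, and it reaches the paper's result by a genuinely different route. The paper conditions on the time of the \emph{first} event in $[s,t]$, which yields an exact renewal-type integral equation for the defective c.d.f.'s $F_{ij}(x;s,t)$; it then takes Laplace transforms term by term (passing through $F^0_{ij}=F^*_{ij}/\theta$), multiplies by an integrating factor and differentiates in $s$ to arrive at the backward equation $\frac{\partial}{\partial s}\mat{F}^*(\theta;s,t)=-\mat{G}(s;\theta)\mat{F}^*(\theta;s,t)$, $\mat{F}^*(\theta;t,t)=\mat{I}$. You instead establish the Chapman--Kolmogorov identity $\mat{F}^*(\theta;s,t)=\mat{F}^*(\theta;s,s+h)\,\mat{F}^*(\theta;s+h,t)$ from conditional independence given $Z(s+h)$, and compute the infinitesimal generator $\mat{F}^*(\theta;s,s+h)=\mat{I}+h\,\mat{G}(s;\theta)+o(h)$ directly; your entrywise bookkeeping (the thinned lump $\frac{d_{ij}}{\mu_{ij}}\e^{-\theta b^{ij}}+\frac{c_{ij}}{\mu_{ij}}$ off the diagonal; the no-event, single-Poisson-lump and continuous-rate contributions on the diagonal, combined via $c_{ii}=-\sum_{k\neq i}\mu_{ik}-d_{ii}$) is correct and lands on exactly the matrix in the theorem. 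Both derivations terminate in the same ODE and then invoke the product-integral characterisation from Section~\ref{sec:background}. What each buys: your semigroup route makes the multiplicative structure --- and hence \emph{why} the answer is a product integral --- explicit, and each entry of $\mat{G}$ is read off from an elementary infinitesimal event, but it carries the burden of uniform $o(h)$ estimates that you rightly flag as the delicate point; the paper's first-jump route works with exact integral equations throughout and needs no such error control, at the price of heavier transform manipulation. (Incidentally, your accounting of the diagonal survival rate as $\sum_{k\neq i}\mu_{ik}+d_{ii}=-c_{ii}$, rather than $-\mu_{ii}$, is the careful version: the Poisson-arrival rate $d_{ii}$ must be included among the ``events,'' and this is precisely what makes the $c_{ii}$ rather than $\mu_{ii}$ appear in the final generator.)
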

\begin{proof}
Conditioning on the time $u$ of the first jump, if any, in $[s,t]$, we get
\begin{eqnarray*}
F_{ij}(x;s,t)&=&\delta_{ij}\exp \left( \int_s^t \mu_{ii}(u)\,\dd u \right)1\left\{ \int_s^t b^i(u)\,\dd u\leq  x \right\} \\
&&+ \sum_{k=1}^p \int_s^t \exp \left( \int_s^u \mu_{ii}(r)\,\dd dr \right)d_{ik}(u)F_{kj}\left(x-\int_s^ub^i(r)dr-b^{ik}(u);u,t\right) \,\dd u  \\
&&+ \sum_{k\neq p} \int_s^t \exp \left( \int_s^u \mu_{ii}(r)\,\dd r \right)c_{ik}(u)F_{kj}\left(x-\int_s^ub^i(r)\,\dd r;u,t\right) \,\dd u . 
\end{eqnarray*}
Here the first line corresponds to the case where there are no Poisson arrivals or jumps in $[s,t]$, the second line corresponds to a jump with reward for $k\neq i$ or a Poisson arrival rewards for $k=i$, while the third line corresponds to a jump without rewards. Consider the Laplace transform
\[  F^0_{ij}(\theta ; s,t)= \int_{-\infty}^\infty \e^{-\theta x}F_{ij}(x;s,t)\,\dd x  . \]
 Then the contribution from the first term amounts to
 \begin{eqnarray*}
 \int_{-\infty}^\infty \e^{-\theta x} \delta_{ij}\exp \left( \int_s^t \mu_{ii}(u)\,\dd u \right)1\left\{ \int_s^t b^i(u)\,\dd u\leq  x \right\} \,\dd x &=&
 \delta_{ij}\exp \left( \int_s^t \mu_{ii}(u)\,\dd u \right)\int_{\int_s^t b^i(u)\,\dd u}^\infty \e^{-\theta x} \,\dd x\\
 &=&\frac{\delta_{ij}}{\theta}\exp \left( \int_s^t \mu_{ii}(u)\,\dd u - \theta\int_s^t b^i(u)\,\dd u \right) .
 \end{eqnarray*}
The second term contributes to the Laplace transform by 
 \begin{eqnarray*}
 \lefteqn{\int_{-\infty}^\infty \e^{-\theta x}\sum_{k=1}^p\int_s^t \exp \left( \int_s^u \mu_{ii}(r)\,\dd r \right)d_{ik}(u)F_{kj}\left(x-\int_s^ub^i(r)dr-b^{ik}(u);u,t\right) \,\dd u\ \,\dd x}~~\\
 &=&\sum_{k=1}^p\int_s^t \exp \left( \int_s^u \mu_{ii}(r)\,\dd r \right)d_{ik}(u) \int_{-\infty}^\infty \e^{-\theta x} F_{kj}\left(x-\int_s^ub^i(r)\,\dd r-b^{ik}(u);u,t\right)\,\dd x \ \,\dd u \\
 &=&\sum_{k=1}^p\int_s^t \exp \left( \int_s^u \mu_{ii}(r)\,\dd r \right)d_{ik}(u) 
\exp \left( -\theta\int_s^ub^i(r)\,\dd r -\theta b^{ik}(u)  \right)
 \int_{-\infty}^\infty \e^{-\theta x} F_{kj}\left(x;u,t\right)\,\dd x \ \,\dd u \\
 &=&\sum_{k=1}^p\int_s^t \exp \left( \int_s^u \mu_{ii}(r)\,\dd r \right)d_{ik}(u)
 \exp \left( -\theta \int_s^ub^i(r)\,\dd r-\theta b^{ik}(u) \right) F^0_{kj}(\theta;u,t) \,\dd u .
 \end{eqnarray*}
 The third term contributes similarly by 
\begin{eqnarray*}
 \lefteqn{\int_{-\infty}^\infty \e^{-\theta x}\sum_{k\neq i}\int_s^t \exp \left( \int_s^u \mu_{ii}(r)\,\dd r \right)c_{ik}(u)F_{kj}\left(x-\int_s^ub^i(r)dr;u,t\right) \,\dd u\ \,\dd x}~~\\
 &=&\sum_{k\neq i}\int_s^t \exp \left( \int_s^u \mu_{ii}(r)\,\dd r \right)c_{ik}(u)
 \exp \left( -\theta \int_s^u b^i(r)\,\dd r \right) F^0_{kj}(\theta;u,t) \,\dd u .
 \end{eqnarray*}
 Thus
 \begin{eqnarray*}
 F^0_{ij}(\theta;s,t)&=&\frac{\delta_{ij}}{\theta}\exp \left( \int_s^t \mu_{ii}(r)\,\dd r - \theta \int_s^t b^i(r)\,\dd r \right) \\
&&+\sum_{k=1}^p\int_s^t \exp \left( \int_s^u \mu_{ii}(r)\,\dd r \right)d_{ik}(u)
 \exp \left( -\theta \int_s^ub^i(r)\,\dd r-\theta b^{ik}(u) \right) F^0_{kj}(\theta;u,t) \,\dd u \\
 &&+ \sum_{k\neq i}\int_s^t \exp \left( \int_s^u \mu_{ii}(r)\,\dd r \right)c_{ik}(u)
 \exp \left( -\theta \int_s^u b^i(r)\,\dd r \right) F^0_{kj}(\theta;u,t) \,\dd u .
 \end{eqnarray*}
 For the corresponding Laplace--Stieltjes transform
 \[  F^*_{ij}(\theta;s,t) = \theta F^0_{ij}(\theta;s,t)  \]
 we then get
 \begin{eqnarray*}
 F^*_{ij}(\theta;s,t)&=&\delta_{ij}\exp \left( \int_s^t \mu_{ii}(u)\,\dd u - \theta \int_s^t b^i(r)\,\dd r \right) \\
&&+\sum_{k=1}^p\int_s^t \exp \left( \int_s^u \mu_{ii}(r)\,\dd r \right)d_{ik}(u)
 \exp \left( -\theta \int_s^ub^i(r)\,\dd r-\theta b^{ik}(u) \right) F^*_{kj}(\theta;u,t) \,\dd u \\
 &&+ \sum_{k\neq i}\int_s^t \exp \left( \int_s^u \mu_{ii}(r)\,\dd r \right)c_{ik}(u)
 \exp \left( -\theta \int_s^u b^i(r)\,\dd r \right) F^*_{kj}(\theta;u,t) \,\dd u 
 \end{eqnarray*}
 so
 \begin{eqnarray*}
 F^*_{ij}(\theta;s,t)\exp \left( -\int_s^t \mu_{ii}(u)\,\dd u + \theta \int_s^t b^i(r)\,\dd r \right)\\&&
 \hspace{-6cm}=\delta_{ij} 
+\sum_{k=1}^p\int_s^t \exp \left( -\int_u^t \mu_{ii}(r)\,\dd r \right)d_{ik}(u)
 \exp \left( \theta \int_u^t b^i(r)\,\dd r-\theta b^{ik}(u) \right) F^*_{kj}(\theta;u,t) \,\dd u \\
 &&\hspace{-6cm}+ \sum_{k\neq i}\int_s^t \exp \left( -\int_u^t \mu_{ii}(r)\,\dd r \right)c_{ik}(u)
 \exp \left( \theta \int_u^t b^i(r)\,\dd r \right) F^*_{kj}(\theta;u,t) \,\dd u 
 \end{eqnarray*}
 Now differentiate with respect to $s$ to get
  \begin{eqnarray*}
\lefteqn{\frac{\partial F^*_{ij}}{\partial s} (\theta;s,t)\exp \left( -\int_s^t \mu_{ii}(u)\,\dd u + \theta \int_s^t b^i(r)\,\dd r \right)}~~~\\
&&+ F^*_{ij}(\theta;s,t)\exp \left( -\int_s^t \mu_{ii}(u)\,\dd u + \theta \int_s^t b^i(r)\,\dd r \right) (\mu_{ii}(s)-\theta b^i(s))  \\&&
=- \sum_{k=1}^p \exp \left( -\int_s^t \mu_{ii}(r)dr \right)d_{ik}(s)
 \exp \left( \theta \int_s^t b^i(r)\,\dd r-\theta b^{ik}(s) \right) F^*_{kj}(\theta;s,t)  \\
 &&- \sum_{k\neq i}\exp \left( -\int_s^t \mu_{ii}(r)\,\dd r \right)c_{ik}(s)
 \exp \left( \theta \int_s^t b^i(r)\,\dd r \right) F^*_{kj}(\theta;s,t)
 \end{eqnarray*}
 which implies that
 \begin{eqnarray*}
\frac{\partial F^*_{ij}}{\partial s} (\theta;s,t)&=&
-\sum_{k} \left[ \mat{D}(s) \bullet \{ \e^{-\theta b^{mn}(s)}\}_{m,n} + \mat{C}(s) - \theta \mat{\Delta}(\vect{b}(s)) \}  \right]_{ik}
  F^*_{kj}(\theta;s,t) ,
 \end{eqnarray*}
 or, in matrix notation,
\begin{equation}
  \frac{\partial}{\partial s} \mat{F}^*(\theta;s,t) = -
 \left[ \mat{D}(s) \bullet \{ \e^{-\theta b^{mn}(s)} \}_{m,n} + \mat{C}(s)- \theta \mat{\Delta}(\vect{b}(s)) \}  \right] 
 \mat{F}^*(\theta;s,t) \label{eq:diff-eq-transform}
  \end{equation}
 with boundary condition of $\mat{F}^*(\theta;t,t)=\mat{I}$. Thus the solution is given by 
 \[ \mat{F}^*(\theta;s,t) = \prod_{s}^t \left( \mat{I} + \left[ \mat{D}(u)\bullet \{ \e^{-\theta b^{mn}(u)} \}_{m,n} + \mat{C}(u) -\theta \mat{\Delta}(\vect{b}(u)) \right] \,\dd u \right) .\]
 \end{proof}

\section{Higher order moments}\label{sec:moments}
Define the matrices 
\begin{equation}
\mat{F}^{(k)}(x)=
\arraycolsep=3.8pt\def\arraystretch{2.2}
\left(\begin{array}{ccccccc}
\mat{M}(x) & {k\choose 1}\mat{R}(x) & {k\choose 2}\mat{C}^{(2)}(x) &  \cdots &  {k\choose k-1}\mat{C}^{(k-1)}(x) & \mat{C}^{(k)}(x) \\
\mat{0} & \mat{M}(x)\mat{I} & {k-1 \choose 1}\mat{R}(x) & \cdots & {k-1\choose k-2}\mat{C}^{(k-2)}(x) &\mat{C}^{(k-1)}(x) \\
\vdots & \vdots & \vdots &  \vdots \vdots \vdots & \vdots &\vdots  \\
\mat{0} &\mat{0} &\mat{0} & \cdots & \mat{M}(x) &\mat{R}(x) \\
\mat{0} &\mat{0} &\mat{0} & \cdots  &\mat{0} & \mat{M}(x) 
\end{array}\right) . \label{eq:F-gen-res}
\end{equation}
and 
\begin{eqnarray}
\mat{H}^{(k)}(s,t)=
\arraycolsep=3.8pt\def\arraystretch{2.2}
\left(\begin{array}{ccccccc}
\mat{P}(s,t) & {k\choose 1}\mat{m}^{(1)}(s,t) & {k\choose 2}\mat{m}^{(2)}(s,t) &  \cdots &  {k\choose k-1}\mat{m}^{(k-1)}(s,t) & \mat{m}^{(k)}(s,t) \\
\mat{0} & \mat{P}(s,t) & {k-1 \choose 1}\mat{m}^{(1)}(s,t) & \cdots & {k-1\choose k-2}\mat{m}^{(k-2)}(s,t) &\mat{m}^{(k-1)}(s,t)\\
\vdots & \vdots & \vdots & \vdots \vdots \vdots & \vdots &\vdots  \\
\mat{0} &\mat{0} &\mat{0} & \cdots & \mat{P}(s,t) &\mat{m}^{(1)}(s,t) \\
\mat{0} &\mat{0} &\mat{0} & \cdots  &\mat{0} & \mat{P}(s,t)
\end{array}\right)  . \label{eq:H-moments-of-F}
\end{eqnarray}
Then we have the following main result.
\begin{theorem}\label{th:main-moments}
\[   \prod_s^t (\mat{I} + \mat{F}^{(k)}(x)\,\dd x) =  \mat{H}^{(k)}(s,t) .   \]
\end{theorem}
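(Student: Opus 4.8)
The plan is to identify the $(i,j)$ block of the product integral on the left with the corresponding block of $\mat{H}^{(k)}(s,t)$, using the block-upper-triangular structure of $\mat{F}^{(k)}(x)$ together with the recursion \eqref{eq:recursion} derived from Lemma~\ref{lemma:van-loan}. Write $\prod_s^t(\mat{I}+\mat{F}^{(k)}(x)\,\dd x)=\{\mat{B}_{ij}(s,t)\}$ in the $(k+1)\times(k+1)$ block indexing of \eqref{eq:F-gen-res}, with rows and columns labelled $0,1,\dots,k$. Since each diagonal block of $\mat{F}^{(k)}(x)$ is $\mat{M}(x)$, \eqref{eq:prod-int-constant}-style reasoning (just the scalar diagonal-block case of Lemma~\ref{lemma:van-loan}) immediately gives $\mat{B}_{\ell\ell}(s,t)=\prod_s^t(\mat{I}+\mat{M}(x)\,\dd x)=\mat{P}(s,t)$, matching the claimed diagonal of $\mat{H}^{(k)}$. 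It also shows the whole matrix is block-upper-triangular, so it remains to pin down $\mat{B}_{\ell,\ell+r}(s,t)$ for $r\ge 1$.

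The key observation is that, by the translation-invariant shape of $\mat{F}^{(k)}(x)$ (row $\ell$ has entries ${k-\ell\choose m-\ell}\mat{C}^{(m-\ell)}(x)$ in column $m$, with $\mat{C}^{(1)}=\mat{R}$, $\mat{C}^{(0)}$ understood as the diagonal term $\mat{M}$), it suffices to compute the top block row, i.e. the blocks $\mat{B}_{0m}(s,t)$, since every lower block row is governed by the same recursion with $k$ replaced by a smaller value and will yield the binomially-weighted lower-order moments by the same argument. Concretely, I would invoke \eqref{eq:recursion} (which is exactly the consequence of Lemma~\ref{lemma:van-loan} for block-upper-triangular matrices) to get
\[
\mat{B}_{0m}(s,t)=\sum_{r=1}^{m}{k\choose r}\int_s^t \mat{P}(s,x)\,\mat{C}^{(r)}(x)\,\mat{B}_{r,m}(x,t)\,\dd x,
\]
and then prove by induction on $m$ that $\mat{B}_{0m}(s,t)={k\choose m}\mat{m}^{(m)}(s,t)$, using that $\mat{B}_{r,m}(x,t)={k-r\choose m-r}\mat{m}^{(m-r)}(x,t)$ from the lower block rows (themselves handled by the same induction applied to the $(k-r)$-th version of the statement).

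The substantive analytic input — and the step I expect to be the main obstacle — is showing that the resulting integral identity really reproduces the moments $\mat{m}^{(m)}(s,t)$. The clean way is to verify that $\mat{H}^{(k)}(s,t)$ satisfies the Kolmogorov backward equation $\frac{\partial}{\partial s}\mat{H}^{(k)}(s,t)=-\mat{F}^{(k)}(s)\mat{H}^{(k)}(s,t)$ with $\mat{H}^{(k)}(t,t)=\mat{I}$, which by uniqueness of the product integral forces $\mat{H}^{(k)}=\prod_s^t(\mat{I}+\mat{F}^{(k)}\,\dd x)$. Equivalently, one differentiates the candidate integral representation of $\mat{m}^{(m)}(s,t)$ directly; this requires establishing an integral (or recursive differential) equation for the $\mat{m}^{(m)}$ themselves, which one obtains by conditioning on the first jump after $s$ exactly as in the proof of Theorem~\ref{Th:4.5a}, carefully tracking how the lump sum $b^{ij}$ and the accrued continuous reward $\int_s^u b^i\,\dd r$ enter the $k$-th power via the binomial theorem — that binomial expansion is precisely the source of the ${k\choose r}$ coefficients in $\mat{F}^{(k)}$ and of the powers $\mat{C}^{(r)}=\mat{D}\bullet\mat{B}^{\bullet r}$. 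Matching the resulting system term by term with the block rows of $\mat{F}^{(k)}(s)\mat{H}^{(k)}(s,t)$ completes the argument; everything else is bookkeeping with product-integral differentiation rules from Section~\ref{sec:background}.
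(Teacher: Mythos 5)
Your proposal is correct and shares the paper's overall architecture: reduce to the top block row via the self-similar structure of $\mat{F}^{(k)}$ (the lower-right $(k\times k)$-block being $\mat{F}^{(k-1)}$, handled by induction on $k$), apply Lemma~\ref{lemma:van-loan} through the recursion \eqref{eq:recursion}, and match against an integral/differential recursion for the moments $\mat{m}^{(m)}$; your binomial bookkeeping ${k\choose r}{k-r\choose m-r}={k\choose m}{m\choose r}$ is exactly what makes the blocks close up. The one place you diverge is the step you correctly flag as the substantive one: establishing the Hattendorff-type recursion for $\mat{m}^{(k)}$ itself. You propose to re-run the first-jump conditioning of Theorem~\ref{Th:4.5a} on the $k$-th power of the reward and extract the ${k\choose m}\mat{C}^{(m)}$ terms from a binomial expansion of the lump-sum contribution. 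The paper instead gets this recursion for free from work already done: it differentiates the transform equation \eqref{eq:diff-eq-transform} $k$ times in $\theta$ at $\theta=0$ using Leibniz's rule, so that the coefficients ${k\choose m}$ and the matrices $\mat{C}^{(m)}=\mat{D}\bullet\mat{B}^{\bullet m}$ appear automatically as $\partial^m_\theta\{\e^{-\theta b^{ij}}\}|_{\theta=0}=(-\mat{B})^{\bullet m}$, yielding \eqref{eq:moment-differential} and, after integration, \eqref{eq:moment-integrals}. Your probabilistic derivation is valid and more self-contained (it does not presuppose existence of the Laplace transform in a neighbourhood of $0$ nor the interchange of $\partial_\theta^k$ with $\partial_s$), but it duplicates the conditioning argument and requires more care with the error terms in $u-s$ when passing from the renewal-type integral equation to the differential one; the paper's route is shorter because Theorem~\ref{Th:4.5a} has already packaged that conditioning once and for all.
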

\begin{proof}
 From the Laplace--Stieltjes transform it is now possible to derive higher order moments. First we notice that $\mat{F}^*(0;s,t)=\prod_s^t (\mat{I}+\mat{M}(u)\,\dd u)$, and we can obtain (recall \eqref{eq:def-m})
  \[  \mat{m}^{(k)}(s,t) \ =\ \Bigl\{ \Exp \Bigl[1\{ Z(t)=j\} R^T(s,t)^k \,\Big|\, Z(s)=i\Bigr]\Bigr\}_{i,j}  \]
  by
  \[  \mat{m}^{(k)}(s,t)= (-1)^k\frac{\partial^k}{\partial \theta^k} \mat{F}^*(\theta;s,t)\biggr\rvert_{\theta=0} . \]
  Now
 \[  \frac{\partial^k}{\partial \theta^k} \{ \e^{-\theta b^{ij}(t)} \}_{i,j}\biggr\rvert_{\theta=0}  = (-\mat{B}(t))^{\bullet k}= (-\mat{B}(t))\bullet \cdots \bullet (-\mat{B}(t)) \]
 ($k$ factors) whereas for $k=0$ (no differentiation, only evaluation at $\theta =0$) it equals the matrix which has all entrances equal to one.
  From \eqref{eq:diff-eq-transform} we get by differentiation with respect to $\theta$ that
 \begin{eqnarray*}
\lefteqn{(-1)^k\frac{\partial^k}{\partial \theta^k}\frac{\partial}{\partial s}\mat{F}^*(\theta;s,t)
 = -(-1)^{k}\frac{\partial^k}{\partial \theta^k}\left(  
  \left[ \mat{D}(s) \bullet \{ \e^{-\theta b^{ij}(s)} \}_{i,j}+\mat{C}(s)- \theta \mat{\Delta}(\vect{b(s)}) \}  \right] \mat{F}^*(\theta;s,t)\right)}~~~ \\
  &=&-\sum_{m=0}^k 
  \begin{pmatrix}
  k \\
  m
  \end{pmatrix}
  (-1)^m\frac{\partial^m}{\partial \theta^m}\left[ \mat{D}(s) \bullet \{ \e^{-\theta b^{ij}(s)} \}_{i,j}+\mat{C}(s)- \theta \mat{\Delta}(\vect{b(s)}) \}  \right] 
  (-1)^{k-m}\frac{\partial^{k-m}}{\partial \theta^{k-m}}\mat{F}^*(\theta;s,t)  .
 \end{eqnarray*}
 Recalling that
 \[ \mat{R}(t) = \mat{D}(t)\bullet \mat{B}(t) + \mat{\Delta}(\vect{b}(t))  \]
 and since 
 \[  \left[ \mat{D}(s) \bullet \{ \e^{-\theta b^{ij}(s)} \}_{i,j}+\mat{C}(s)- \theta \mat{\Delta}(\vect{b(s)}) \}  \right] \biggr\rvert_{\theta=0}  = \mat{D}(s)+\mat{C}(s) = \mat{M}(s)\]
 we get that
 \begin{eqnarray}
 \frac{\partial}{\partial s}\mat{m}^{(k)}(s,t)=-\Big[ \mat{M}(s) \mat{m}^{(k)}(s,t) + 
 k\mat{R}(s)\mat{m}^{(k-1)}(s,t) +\sum_{m=2}^k \begin{pmatrix}
  k \\
  m
  \end{pmatrix}
  \mat{D}(s)\bullet \mat{B}^{\bullet m}(s) \mat{m}^{(k-m)}(s,t)
  \Big],\  \label{eq:moment-differential}
 \end{eqnarray} 
 where
 \[  \mat{m}^{(0)}(s,t) = \mat{F}^*(0;s,t)=\prod_s^t (\mat{I}+\mat{M}(u)\,\dd u) = \mat{P}(s,t) .  \]
 Multiplying from the left on both sides with $\prod_t^s (\mat{I}+\mat{M}(u)\,\dd u)$ (see also \eqref{eq:inverse-prod-int}) we get
 \begin{eqnarray*}
 \frac{\partial}{\partial s}\left(\prod_t^s (\mat{I}+\mat{M}(u)\,\dd u)  \mat{m}^{(k)}(s,t)  \right)&=&-
 k\prod_t^s (\mat{I}+\mat{M}(u)\,\dd u)\mat{R}(s)\mat{m}^{(k-1)}(s,t) \\
 &&- \sum_{m=2}^k \begin{pmatrix}
  k \\
  m
  \end{pmatrix}
  \prod_t^s (\mat{I}+\mat{M}(u)\,\dd u)\left( \mat{D}(s)\bullet \mat{B}^{\bullet m}(s)\right) \mat{m}^{(k-m)}(s,t) .
 \end{eqnarray*}
 Integrating the equation then gives
 \begin{eqnarray}
 \mat{m}^{(k)}(s,t)&=&k \int_s^t \mat{P}(s,x)\mat{R}(x)\mat{m}^{(k-1)}(x,t)\,\dd x \nonumber \\
 &&+ \sum_{m=2}^k \begin{pmatrix}
  k \\
  m
  \end{pmatrix} \int_s^t \mat{P}(s,x)\left( \mat{D}(x)\bullet \mat{B}^{\bullet m}(x)  \right) \mat{m}^{(k-m)}(x,t)\,\dd x   \nonumber \\
  &\stackrel{\eqref{def-symbol:C}}{=}& k \int_s^t \mat{P}(s,x)\mat{R}(x)\mat{m}^{(k-1)}(x,t)\,\dd x \nonumber \\
 &&+ \sum_{m=2}^k \begin{pmatrix}
  k \\
  m
  \end{pmatrix} \int_s^t \mat{P}(s,x) 
  \mat{C}^{(m)}(x) \mat{m}^{(k-m)}(x,t)\,\dd x .
  \label{eq:moment-integrals}
 \end{eqnarray}
 Now we employ an induction argument to prove the identity of the product integral of the above matrix indeed equals $\mat{H}^{(k)}(s,t)$. 
 For $k=1$ the results amounts to 
\[   \prod_s^t \left(\mat{I}+
\begin{pmatrix}
\mat{M}(u) & \mat{R}(u) \\
\mat{0} & \mat{M}(u)
\end{pmatrix}
\,\dd u\right)  =  \begin{pmatrix}
\mat{P}(s,t) & \mat{m}^{(1)}(s,t) \\
\mat{0} & \mat{P}(s,t)
\end{pmatrix} \]
which indeed holds true since Lemma \ref{lemma:van-loan} implies that
\begin{equation*}
 \mat{m}^{(1)}(s,t) = \int_s^t \mat{P}(s,x)
\mat{R}(x)  \mat{P}(x,t) \,\dd x , \label{eq:first-moment-int}
\end{equation*}
which has been previously established in Lemma \ref{lemma:rewards}. 
Assume that the results hold true for dimension $k-1$. 
Partition the matrix $\mat{F}^{(k)}(u)$ as
\[ \mat{F}^{(k)}(u) = 
\begin{pmatrix}
\mat{M}(u) & \mat{x}^{(k)}(u) \\
\mat{0} & \mat{F}^{(k-1)}(u)
\end{pmatrix} ,
  \]
 where
 \[ \mat{x}^{(k)}(u) = \left(  {k\choose 1}\mat{R},\  {k\choose 2}\mat{C}^{(2)},\   {k\choose 3}\mat{C}^{(3)},  \cdots ,\   {k\choose k-1}\mat{C}^{(k-1)},\  {k \choose k}\mat{C}^{(k)}  \right) . \] 
 Then use Lemma \ref{lemma:van-loan}, the induction hypothesis and \eqref{eq:moment-integrals} to verify the correct form of the first block row. 
 \end{proof}

\begin{remark}
The central moments 
\[  \Exp \Bigl[U(s,t)-\Exp \bigl[U(s,t)\,\,|Z(s)=i\bigr]\Bigr]^k\,\Big|\, Z(s)=i \Bigr]\]
can be obtained by Theorem \ref{th:main-moments} by a simple reparametrisation, replacing 
$\vect{b}(t)=(b^1(t),...,b^p(t))$ by 
\[  \vect{b}(t)-\frac{\Exp \bigl[U(s,t)\,\big|\,Z(s)=i\bigr]}{t-s} \vect{e},  \] where the expected values 
$\Exp \bigl[U(s,t)\,\big|\,Z(s)=i\bigr]$ are then first calculated by Theorem ~\ref{th:main-moments} in the usual way with $k=1$.
\end{remark}

\begin{example}[Markovian Arrival Process]
Let $\{ Z(t)\}_{t\geq 0}$ be a time--homogeneous Markov jump process with state--space $E=\{1,2,...,p\}$ and intensity matrix
\[ \mat{\Lambda}=\mat{C}+\mat{D} , \]
where $\mat{C}$ is a sub--intensity matrix and $\mat{D}$ a non--negative matrix. A Markovian Arrival Process $N$ is a point process which is constructed in the following way. Upon transitions from $i$ to $j$ of $Z(t)$, an arrival of $N$ is produced with probability $d_{ij}/(c_{ij}+d_{ij})$, and during during sojourns in state $i$, there are Poisson arrivals at rate $d_{ii}$. Let $N(0,t)$ denote the number of arrivals in the MAP during $[0,t]$. Then we may calculate the moments of $N(0,t)$ by the use of Theorem \ref{th:main-moments}. We identify $\mat{M}(s)=\mat{C}+\mat{D}$, $\mat{B}(t) = \mat{E}$ (the matrix of ones), $\vect{b}(s)=\vect{0}$,  $\mat{R}(s)=\mat{D}$, $\mat{C}^{(k)}(s)=\mat{D}$ and
\[  \mat{F}^{(k)}=\mat{F}^{(k)}(x)=
\arraycolsep=3.8pt\def\arraystretch{2.2}
\left(\begin{array}{ccccccc}
\mat{C}+\mat{D} & {k\choose 1}\mat{D} & {k\choose 2}\mat{D}&  \cdots &  {k\choose k-1}\mat{D} & \mat{D} \\
\mat{0} & \mat{C}+\mat{D} & {k-1 \choose 1}\mat{D} & \cdots & {k-1\choose k-2}\mat{D} &\mat{D} \\
\vdots & \vdots & \vdots &  \vdots \vdots \vdots & \vdots &\vdots  \\
\mat{0} &\mat{0} &\mat{0} & \cdots & \mat{C}+\mat{D} &\mat{D} \\
\mat{0} &\mat{0} &\mat{0} & \cdots  &\mat{0} & \mat{C}+\mat{D}
\end{array}\right) .  \]
Then the moments 
\[  \mat{m}^{(k)}(s,t)=\bigg\{ \Exp \left. \left( 1\{ Z(t)=j \} N(s,t)^k \right|Z(s)=i \right) \bigg\}_{i,j\in E} \]
are obtained through the matrix--exponential of $\mat{F}^{(k)}$ as
\begin{eqnarray*}
\arraycolsep=3.8pt\def\arraystretch{2.2}
\left(\begin{array}{ccccccc}
\mat{P}(s,t) & {k\choose 1}\mat{m}^{(1)}(s,t) & {k\choose 2}\mat{m}^{(2)}(s,t) &  \cdots &  {k\choose k-1}\mat{m}^{(k-1)}(s,t) & \mat{m}^{(k)}(s,t) \\
\mat{0} & \mat{P}(s,t) & {k-1 \choose 1}\mat{m}^{(1)}(s,t) & \cdots & {k-1\choose k-2}\mat{m}^{(k-2)}(s,t) &\mat{m}^{(k-1)}(s,t)\\
\vdots & \vdots & \vdots & \vdots \vdots \vdots & \vdots &\vdots  \\
\mat{0} &\mat{0} &\mat{0} & \cdots & \mat{P}(s,t) &\mat{m}^{(1)}(s,t) \\
\mat{0} &\mat{0} &\mat{0} & \cdots  &\mat{0} & \mat{P}(s,t)
\end{array}\right)  
= \exp \left(  \mat{F}^{(k)} (t-s) \right) .
\end{eqnarray*}
In particular, the conditional moments 
\[  
\begin{pmatrix}
\Exp \left. \left( N(s,t)^k \right|Z(s)=1 \right) \\
 \Exp \left. \left( N(s,t)^k \right|Z(s)=2 \right) \\
 \vdots \\
 \Exp \left. \left( N(s,t)^k \right|Z(s)=p \right)
\end{pmatrix} = \exp \left(  \mat{F}^{(k)} (t-s) \right)\vect{e} .
  \]
The factorial moments
\[  \mat{fm}^{(k)}(s,t) = \bigg\{  \Exp \left. \left( 1\{ Z(t)=j \} N(s,t)(N(s,t)-1)\cdots (N(s,t)-k+1) \right|Z(s)=1 \right)  \bigg\}_{i,j\in E} \]
are similarly obtained by the formula 
\begin{eqnarray*}
\lefteqn{\arraycolsep=3.8pt\def\arraystretch{2.2}
\left(\begin{array}{ccccccc}
\mat{P}(s,t) & \mat{fm}^{(1)}(s,t) & \mat{fm}^{(2)}(s,t) &  \cdots &  \mat{fm}^{(k-1)}(s,t) & \mat{fm}^{(k)}(s,t) \\
\mat{0} & \mat{P}(s,t) & \mat{fm}^{(1)}(s,t) & \cdots & \mat{fm}^{(k-2)}(s,t) &\mat{fm}^{(k-1)}(s,t)\\
\vdots & \vdots & \vdots & \vdots \vdots \vdots & \vdots &\vdots  \\
\mat{0} &\mat{0} &\mat{0} & \cdots & \mat{P}(s,t) &\mat{fm}^{(1)}(s,t) \\
\mat{0} &\mat{0} &\mat{0} & \cdots  &\mat{0} & \mat{P}(s,t)
\end{array}\right) }~~\\ 
&=& \exp \left(   
\begin{pmatrix}
\mat{C}+\mat{D} & \mat{D} & \mat{0} & \mat{0} & \cdots & \mat{0} & \mat{0} \\
\mat{0} & \mat{C}+\mat{D} & \mat{D} & \mat{0}  & \cdots & \mat{0} & \mat{0} \\
\vdots & \vdots & \vdots & \vdots & \vdots \vdots \vdots & \vdots &\vdots  \\
\mat{0} &\mat{0} &\mat{0} & \mat{0} &\cdots & \mat{C}+\mat{D} &\mat{D} \\
\mat{0} &\mat{0} &\mat{0} & \mat{0} &\cdots & \mat{0} &\mat{C}+\mat{D}  \\
\end{pmatrix} (t-s)
\right) .
\end{eqnarray*}
Integral representations of the moments and factorial moments in a MAP has been considered in \cite{bo-Uffe-2007}.\qed
\end{example}

Next we turn to the case of discounted rewards (future payments). In principle we may calculate the moments of the future discounted payments by applying Theorem \ref{th:main-moments} to the discounted rewards
\[  \e^{-\int_s^ur(x)\,\dd s}b^i(u) \ \ \mbox{and}\ \ \e^{-\int_s^ur(x)\,\dd s}b^{ij}(u) . \]
We may however obtain an explicit matrix representation which also involves the interest rate $r(x)$ in a more convenient way and which is closer to standard intuition in life insurance (like e.g. Hattenforffs theorem). We define
$\mat{F}_U^{(k)}(x)
$ as the matrix
\begin{equation}
\arraycolsep=3.8pt\def\arraystretch{2.2}
\left(\begin{array}{ccccccc}
\mat{M}(x)-kr(x)\mat{I} & {k\choose 1}\mat{R}(x) & {k\choose 2}\mat{C}^{(2)}(x) &  \cdots &  {k\choose k-1}\mat{C}^{(k-1)}(x) & \mat{C}^{(k)}(x) \\
\mat{0} & \mat{M}(x)-(k-1)r(x)\mat{I} & {k-1 \choose 1}\mat{R}(x) & \cdots & {k-1\choose k-2}\mat{C}^{(k-2)}(x) &\mat{C}^{(k-1)}(x) \\
\vdots & \vdots & \vdots &  \vdots \vdots \vdots & \vdots &\vdots  \\
\mat{0} &\mat{0} &\mat{0} & \cdots & \mat{M}(x)-r(x)\mat{I} &\mat{R}(x) \\
\mat{0} &\mat{0} &\mat{0} & \cdots  &\mat{0} & \mat{M}(x) 
\end{array}\right) \label{eq:F-gen-res}
\end{equation}
and let 
\[  \mat{G}^{(k)}(s,t)= \prod_s^t (\mat{I} + \mat{F}_U^{(k)}(x)\,\dd x) .  \]
The matrix $\mat{F}_U^{(k)}(x)$ is a $(k+1)p\times (k+1)p$ block--partitioned matrix with blocks of
sizes $p\times p$. Thus $\mat{G}^{(k)}(s,t)$ is also a $(k+1)p\times (k+1)p$ matrix, and we define
a similar block partitioning as for $\mat{F}^{(k)}(x)$, letting $\mat{G}_{ij}^{(k)}(s,t)$ denote
the $ij$'th block which corresponds to the $ij$'th block of $\mat{F}_U^{(k)}(x)$. Then we have the following main result.
\begin{theorem}\label{th:main-reserve}
For $j=1,...,k$ we have that
\[   \mat{V}^{(j)}(s,t) = \mat{G}_{k+1-j,k+1}^{(k)}(s,t) , \]
whereas 
\[  \mat{P}(s,t) =  \mat{G}_{k+1,k+1} . \]
\end{theorem}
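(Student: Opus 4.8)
The plan is to reduce Theorem~\ref{th:main-reserve} to Theorem~\ref{th:main-moments} by a discounting change of variables, and then to extract the relevant blocks using Lemma~\ref{lemma:van-loan}. First I would observe that $U(s,t)$ is obtained from $R^T(s,t)$ by replacing the payment rates $b^i(u)$ and jump rewards $b^{ij}(u)$ with their discounted counterparts $\e^{-\int_s^u r(x)\,\dd x}b^i(u)$ and $\e^{-\int_s^u r(x)\,\dd x}b^{ij}(u)$, so by Theorem~\ref{th:main-moments} applied with these time--dependent payments one gets a product--integral representation of the $\mat{V}^{(j)}(s,t)$; but this is awkward because the discount factor depends on the lower limit $s$. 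The cleaner route is to differentiate directly: mimic the proof of Theorem~\ref{th:main-moments}, starting instead from the Laplace--Stieltjes transform of $U(s,t)$ (or equivalently from the moment recursion \eqref{eq:moment-integrals} adapted to discounting), and show that $v_{ij}^{(k)}(s,t)$ satisfies
\[
\frac{\partial}{\partial s}\mat{V}^{(k)}(s,t)
= -\Bigl[(\mat{M}(s)-kr(s)\mat{I})\mat{V}^{(k)}(s,t)
+ k\mat{R}(s)\mat{V}^{(k-1)}(s,t)
+ \sum_{m=2}^{k}\binom{k}{m}\mat{C}^{(m)}(s)\mat{V}^{(k-m)}(s,t)\Bigr],
\]
with $\mat{V}^{(0)}(s,t)=\mat{P}(s,t)$ and terminal condition $\mat{V}^{(k)}(t,t)=\mat{0}$ for $k\ge 1$. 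The extra term $-kr(s)\mat{V}^{(k)}(s,t)$ relative to \eqref{eq:moment-differential} arises because $U(s,t)$ carries a factor $\e^{-kr(s)\,\dd s}$ when the initial instant is shifted; this is exactly Thiele's equation of Theorem~\ref{th:thiele-partial} in the case $k=1$, which serves as the induction base.

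Next I would set up an induction on $k$. For $k=1$ the claim is precisely \eqref{eq:Thiele-explicit-sol} together with the identification $\mat{G}^{(1)}_{1,2}(s,t)=\mat{V}(s,t)=\mat{V}^{(1)}(s,t)$ and $\mat{G}^{(1)}_{2,2}(s,t)=\mat{P}(s,t)$, already proved in Section~\ref{sec:matrix-reserves}. For the inductive step, partition
\[
\mat{F}_U^{(k)}(x)=
\begin{pmatrix}
\mat{M}(x)-kr(x)\mat{I} & \mat{x}^{(k)}(x)\\
\mat{0} & \mat{F}_U^{(k-1)}(x)
\end{pmatrix},
\qquad
\mat{x}^{(k)}(x)=\Bigl(\tbinom{k}{1}\mat{R}(x),\,\tbinom{k}{2}\mat{C}^{(2)}(x),\,\dots,\,\tbinom{k}{k}\mat{C}^{(k)}(x)\Bigr),
\]
exactly as in the proof of Theorem~\ref{th:main-moments}. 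Lemma~\ref{lemma:van-loan} then gives the first block row of $\mat{G}^{(k)}(s,t)$ as
\[
\bigl(\mat{G}^{(k)}_{1,2},\dots,\mat{G}^{(k)}_{1,k+1}\bigr)(s,t)
= \int_s^t \prod_s^x\!\bigl(\mat{I}+(\mat{M}(u)-kr(u)\mat{I})\,\dd u\bigr)\,\mat{x}^{(k)}(x)\,\mat{G}^{(k-1)}(x,t)\,\dd x,
\]
where $\mat{G}^{(k-1)}(x,t)=\prod_x^t(\mat{I}+\mat{F}_U^{(k-1)}(u)\,\dd u)$ because the lower-right $kp\times kp$ block of $\mat{F}_U^{(k)}$ is exactly $\mat{F}_U^{(k-1)}$. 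Reading off the last block column of this identity, and using the induction hypothesis to replace $\mat{G}^{(k-1)}_{i,k}(x,t)$ by $\mat{V}^{(k-i)}(x,t)$ (with $\mat{G}^{(k-1)}_{k,k}=\mat{P}(x,t)$), yields
\[
\mat{G}^{(k)}_{1,k+1}(s,t)
= k\!\int_s^t\!\prod_s^x\!\bigl(\mat{I}+(\mat{M}-kr\,\mat{I})\,\dd u\bigr)\mat{R}(x)\mat{V}^{(k-1)}(x,t)\,\dd x
+ \sum_{m=2}^{k}\binom{k}{m}\!\int_s^t\!\prod_s^x\!\bigl(\mat{I}+(\mat{M}-kr\,\mat{I})\,\dd u\bigr)\mat{C}^{(m)}(x)\mat{V}^{(k-m)}(x,t)\,\dd x.
\]
Differentiating this in $s$ recovers exactly the displayed differential equation for $\mat{V}^{(k)}$ above, and since both sides vanish at $s=t$, uniqueness of solutions to the linear ODE forces $\mat{G}^{(k)}_{1,k+1}(s,t)=\mat{V}^{(k)}(s,t)$; more generally $\mat{G}^{(k)}_{k+1-j,k+1}(s,t)=\mat{V}^{(j)}(s,t)$ for $j=1,\dots,k$ by the same bookkeeping along the last block column, while $\mat{G}^{(k)}_{k+1,k+1}(s,t)=\prod_s^t(\mat{I}+\mat{M}(u)\,\dd u)=\mat{P}(s,t)$ is immediate from the bottom-right block of $\mat{F}_U^{(k)}$.

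The main obstacle is establishing the discounted moment recursion cleanly in the first place: one must carry the interest rate through the Laplace--transform computation of Section~\ref{sec:laplace-transform} (now applied to $\dd B$ with the extra discount factor $\e^{-\int_s^u r}$ inside the integral defining $U(s,t)$), keeping careful track of how shifting the lower integration limit $s$ produces the $-kr(s)$ term at order $k$, rather than at order $1$ only. Once that recursion is in hand, the induction is the same block--triangular manipulation as in Theorem~\ref{th:main-moments}, so the remaining work is essentially bookkeeping with binomial coefficients and the product rule \eqref{eq:product-rule} for product integrals.
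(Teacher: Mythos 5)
Your proposal is sound and reaches the right structure, but it organizes the argument differently from the paper. The paper's proof applies Theorem~\ref{th:main-moments} directly to the discounted payment functions $\e^{-\int_s^u r}b^i(u)$ and $\e^{-\int_s^u r}b^{ij}(u)$, obtaining integral representations whose discount factors depend on the lower limit $s$, and then shows by induction along the block recursion \eqref{eq:recursion} that the factor $\exp\bigl(-(k-i+1)\int_s^x r(u)\,\dd u\bigr)$ attached to $\mat{B}_{i,k+1}(s,t)$ can be absorbed into the diagonal product integral, turning $\prod_s^x(\mat{I}+\mat{M}(u)\,\dd u)$ into $\prod_s^x(\mat{I}+(\mat{M}(u)-(k-i+1)r(u)\mat{I})\,\dd u)$; the generalized Hattendorff ODE is then derived \emph{afterwards} as a corollary. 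You instead take the Hattendorff-type recursion for $\mat{V}^{(k)}$ as the key ingredient, establish the same block-triangular integral representation for $\mat{G}^{(k)}_{1,k+1}$ via Lemma~\ref{lemma:van-loan} and the induction hypothesis, and conclude by uniqueness of solutions to the linear ODE with terminal condition $\mat{0}$. Both routes rest on the same Lemma~\ref{lemma:van-loan} machinery and the same bookkeeping of how an $(i,i+m)$ block carries discounting of order $m$; yours is logically cleaner (an explicit uniqueness argument rather than an informal ``redistribution of scaling factors''), at the price of having to prove the discounted moment recursion independently. That recursion is exactly the theorem the paper labels Hattendorff, and your sketch of its derivation (shifting $s$ so that $U(s,t)\approx \dd B(s)+\e^{-r(s)\,\dd s}U(s+\dd s,t)$, whence $U^k$ picks up $\e^{-kr(s)\,\dd s}$) is the standard and correct route, but it is the one substantive step you have not carried out in detail; a complete write-up would need to push the discount factor through the Laplace-transform computation of Section~\ref{sec:laplace-transform} or give the infinitesimal conditioning argument in full. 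Your identification of the lower-right $kp\times kp$ block of $\mat{F}^{(k)}_U$ with $\mat{F}^{(k-1)}_U$, which makes the rows $2,\dots,k+1$ of the last block column follow immediately from the induction hypothesis, is correct and matches the paper's implicit use of the same fact.
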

The theorem states that the right block--column of $\mat{G}^{(k)}(s,t)$ contains the moments 
$ \mat{V}^{(j)}(s,t)$, starting with the highest moment in the upper right corner and finishing with the
transition matrix in the lower right corner (which may be considered as the zeroth moment). Symbolically, 
\begin{equation}
  \prod_s^t (\mat{I} + \mat{F}^{(k)}(x)\,\dd x) 
=
\begin{pmatrix}
* & * & * & * & \cdots * & \mat{V}^{(k)}(s,t) \\
* & * & * & * & \cdots * & \mat{V}^{(k-1)}(s,t) \\
* & * & * & * & \cdots * & \mat{V}^{(k-2)}(s,t) \\
\vdots & \vdots & \vdots &\vdots & \vdots\vdots\vdots & \vdots & \vdots \\
* & * & * & * & \cdots * & \mat{V}^{(1)}(s,t) \\
* & * & * & * & \cdots * & \mat{P}(s,t) \\
\end{pmatrix} . \label{eq:symbolically}
\end{equation}

The idea of the general proof is most easily explained through the following example, which proves the result of Theorem \ref{th:main-reserve} for the case $k=2$, which is the lowest non--trivial order.
\begin{example}[quadratic moment]\label{ex:2-moment}
First we consider the product integral
\[  \mat{G}(s,t) = \prod_s^t \left(  \mat{I}  + 
\begin{pmatrix}
\mat{A}_{11}(x) & \mat{A}_{12}(x) & \mat{A}_{13}(x) \\
\mat{0} & \mat{A}_{22}(x) & \mat{A}_{23}(x) \\
\mat{0} & \mat{0} & \mat{A}_{33}(x)  .
\end{pmatrix} \,\dd x
  \right)  = 
 \begin{pmatrix}
 \mat{G}_{11}(s,t) & \mat{G}_{12}(s,t) & \mat{G}_{13}(s,t) \\
 \mat{0} &  \mat{G}_{22}(s,t) & \mat{G}_{23}(s,t) \\
 \mat{0} & \mat{0} & \mat{G}_{33}(s,t)
 \end{pmatrix} .  \]
Employing Lemma \ref{lemma:van-loan} inductively, by first partioning the matrix as
\[ \left(\begin{array}{c|cc}
\mat{A}_{11}(x) & \mat{A}_{12}(x) & \mat{A}_{13}(x)  \\ \hline
\mat{0} & \mat{A}_{22}(x) & \mat{A}_{23}(x) \\
\mat{0} & \mat{0} & \mat{A}_{33}(x)  
\end{array}\right) . \]
we see that
\begin{eqnarray*}
\mat{G}_{11}(s,t)&=&\prod_s^t (\mat{I} + \mat{A}_{11}(x)\,\dd x) \\
\begin{pmatrix}
\mat{G}_{22}(s,t) & \mat{G}_{23}(s,t) \\
\mat{0} & \mat{G}_{33}(s,t)
\end{pmatrix} &=&
\prod_s^t \left(   
\mat{I} +
\begin{pmatrix}
\mat{A}_{22}(x) & \mat{A}_{23}(x) \\
\mat{0} & \mat{A}_{33}(x)
\end{pmatrix}\,\dd x
\right) \\
&=& \begin{pmatrix}
\prod_s^t (\mat{I} + \mat{A}_{22}(x)\,\dd x) & \int_s^t \prod_s^x (\mat{I} + \mat{A}_{22}(u)\,\dd u)
\mat{A}_{23}(x) \prod_x^t (\mat{I} + \mat{A}_{11}(u)\,\dd u)\,\dd x
  \\
\mat{0} & \prod_s^t (\mat{I} + \mat{A}_{33}(x)\,\dd x) 
\end{pmatrix}
\end{eqnarray*}
whereas
\begin{eqnarray*}
\left(\mat{G}_{12}(s,t), \mat{G}_{13}(s,t)\right)&=&
\int_s^t \prod_s^x (\mat{I}+\mat{A}_{11}(u)\,\dd u) \left( \mat{A}_{12}(x),\ \mat{A}_{13}(x)  \right) \prod_{x}^t \left( \mat{I} +  
\begin{pmatrix}
\mat{A}_{22}(u) & \mat{A}_{23}(u) \\
\mat{0} & \mat{A}_{33}(u)
\end{pmatrix}\,\dd u
 \right) \,\dd x
\end{eqnarray*}
so that
\begin{eqnarray*}
\mat{G}_{12}(s)&=&\int_s^t \prod_s^x (\mat{I}+\mat{A}_{11}(u)\,\dd u) \mat{A}_{12}(x)\prod_x^t (\mat{I}+\mat{A}_{22}(u)\,\dd u)\,\dd x
\end{eqnarray*}
and
\begin{eqnarray*}
\mat{G}_{13}(s)&=&\int_s^t \prod_s^x (\mat{I}+\mat{A}_{11}(u)\,\dd u) \mat{A}_{13}(x)\prod_x^t (\mat{I}+\mat{A}_{33}(u)\,\dd u)\,\dd x \\
&&+ \int_s^t \prod_s^x (\mat{I}+\mat{A}_{11}(u)\,\dd u)\mat{A}_{12}(x) 
\int_x^t \prod_x^y (\mat{I}+\mat{A}_{22}(u)\,\dd u)\mat{A}_{23}(y) \prod_y^t (\mat{I}+\mat{A}_{33}(u)\,\dd u)dy .
\end{eqnarray*}
Now assume that we are concerned with the discounted prices. Then at any time $x\in [s,t]$, we discount the price by 
\[   \exp \left(  -\int_s^x r(u)\,\dd u  \right) .  \]
In the above expression for $\mat{G}_{13}(s,t)$, 
\[  \mat{A}_{13}(x) = \mat{C}^{(2)}(x) = \mat{D}(x)\bullet \mat{B}(x)\bullet \mat{B}(x)  \]
while 
\[ \mat{A}_{12}(x)=\mat{A}_{23}(x)=\mat{R}(x)=\mat{D}(x)\bullet \mat{B}(x)+\mat{\Delta}(\vect{b}(x)).  \]
In the expression for $\mat{G}_{13}(s,t)$, $\mat{A}_{13}(x)$ produces a discount of 
\[ \exp \left( -2\int_s^x r(u)\,\dd u \right) , \]
$\mat{A}_{12}(x)$ a discount of 
\[ \exp \left( -\int_s^x r(u)\,\dd u \right)  \]
while $\mat{A}_{23}(y)$ produces a discount of
\[  \exp \left( -\int_s^y r(u)\,\dd u \right) = \exp \left( -\int_s^x r(u)\,\dd u \right)\exp \left( -\int_x^y r(u)\,\dd u \right) . \]
Thus we may write
\begin{eqnarray*}
\mat{G}_{13}(s,t)&=& \int_s^t \prod_s^x (\mat{I}+[\mat{A}_{11}(u)-2r(u)\mat{I}]\,\dd u) \mat{A}_{13}(x)\prod_x^t (\mat{I}+\mat{A}_{33}(u)\,\dd u)\,\dd x \\
&&\hspace{-2cm}+ \int_s^t \prod_s^x (\mat{I}+[\mat{A}_{11}(u)-2r(u)\mat{I}]\,\dd u)\mat{A}_{12}(x) 
\int_x^t \prod_x^y (\mat{I}+[\mat{A}_{22}(u)-r(u)\mat{I}]\,\dd u)\mat{A}_{23}(y) \prod_y^t (\mat{I}+\mat{A}_{33}(u)\,\dd u)dy .
\end{eqnarray*}
Let
\[ \mat{H}^{(2)}(x) =
\begin{pmatrix}
\mat{M}(x)-2r(x)\mat{I} & 2\mat{R}(x) & \mat{C}^{(2)}(x) \\
\mat{0} & \mat{M}(x)-r(x)\mat{I} & \mat{R}(x) \\
\mat{0} & \mat{0} & \mat{M}(x)
\end{pmatrix} ,
 \]
 and
 \[ \mat{V}^{(2)}(s,t)=\prod_s^t (\mat{I}+\mat{H}^{(2)}(x)\,\dd x) = 
\begin{pmatrix}
 \mat{V}^{(2)}_{11}(s,t) & \mat{V}^{(2)}_{12}(s,t) & \mat{V}^{(2)}_{13}(s,t) \\
 \mat{0} & \mat{V}^{(2)}_{22}(s,t) & \mat{V}^{(2)}_{23}(s,t) \\
 \mat{0} &\mat{0} & \mat{V}^{(2)}_{33}(s,t)
\end{pmatrix} .
  \]
Then 
\begin{eqnarray*}
\mat{V}^{(2)}_{33}(s,t)&=& \mat{P}(s,t) \\
\mat{V}^{(2)}_{23}(s,t)&=& \left\{ \Exp\left. \left(1\{ Z(t)=j \} \mat{U}(s,t) \right| Z(s)=i \right)\right\}_{i,j} \\
\mat{V}^{(2)}_{13}(s,t)&=&\left\{ \Exp\left. \left(1\{ Z(t)=j \} \mat{U}^2(s,t) \right| Z(s)=i \right)\right\}_{i,j} .
\end{eqnarray*}
In particular,
\begin{eqnarray*}
\Exp (U(s,t)| Z(s)=i)&=&\vect{e}_i^\prime \mat{V}^{(2)}_{23}(s,t)\vect{e} \\
\Exp (U^2(s,t)| Z(s)=i)&=&\vect{e}_i^\prime \mat{V}^{(2)}_{13}(s,t)\vect{e}
\end{eqnarray*}
\qed

\end{example}
\noindent We now turn to the general proof. 

\begin{proof}[Proof of Theorem \ref{th:main-reserve}]
We apply Theorem \ref{th:main-moments} to the discounted prices
\[  \e^{-\int_s^ur(x)\,\dd s}b^i(u) \ \ \mbox{and}\ \ \e^{-\int_s^ur(x)\,\dd s}b^{ij}(u) . \]
This will indeed provide us with the correct result (for fixed $s$), and as in 
Example \ref{ex:2-moment} we redistribute the discounted terms into the block diagonal matrices.
For simplicity of identification of the individual blocks of the matrix, we write $\mat{F}^{(k)}_{U}(u)$ on a black partitioned way as
\[   \mat{F}_U^{(k)}(u) =
\begin{pmatrix}
\mat{A}_{11}(u) & \mat{A}_{12}(u) & \cdots &\mat{A}_{1,k+1}(u) \\
\mat{0} & \mat{A}_{22}(u) & \cdots &\mat{A}_{2,k+1}(u) \\
\vdots & \vdots & \vdots\vdots\vdots & \vdots \\
\mat{0} & \mat{0} & \cdots &\mat{A}_{k+1,k+1}(u) \\
\end{pmatrix} 
 \]
 and
 \[
\prod_s^t (\mat{I}+\mat{F}_U^{(k)}(u)\,\dd u) = 
\begin{pmatrix}
\mat{B}_{11}(s,t) & \mat{B}_{12}(s,t) & \cdots &\mat{B}_{1,k+1}(s,t) \\
\mat{0} & \mat{B}_{22}(s,t) & \cdots &\mat{B}_{2,k+1}(s,t) \\
\vdots & \vdots & \vdots\vdots\vdots & \vdots \\
\mat{0} & \mat{0} & \cdots &\mat{B}_{k+1,k+1}(s,t) \\
\end{pmatrix} .
\]
 For example, $\mat{A}_{ii}(u)=\mat{M}(u)$ and $\mat{A}_{i,i+1}(u)=(k-i+1)\mat{R}(u)$. The matrix
 $\mat{A}_{i,i+m}(x)$ is then scaled by 
 \[ \exp (-m\int_s^x r(u)\,\dd u) . \]
For $k=1$ it is clear that 
\[ \mat{B}_{k,k+1}(s,t) = \int_s^t \prod_{s}^x (\mat{I}+\mat{A}_{kk}(u)\,\dd u)\mat{A}_{k,k+1}(x)
\prod_x^t (\mat{I}+\mat{A}_{k+1,k+1}(u)\,\dd s)\ \,\dd x  \]
has scaling factor $\exp (-\int_s^x r(u)\,\dd u)$, while in Example \ref{ex:2-moment} we saw that 
$ \mat{B}_{k-1,k+1}(s,t)$ has scaling factor  $\exp (-2\int_s^x r(u)\,\dd u)$. Now assume (induction) that all 
$\mat{B}_{j,k+1}(s,t)$, $j=i+1,...,k$ have scaling factors $\exp (-(k-j+1)\int_s^x r(u)\,\dd u)$. From the recursion \eqref{eq:recursion},
\[ \mat{B}_{i,k+1}(s,t) = \sum_{j=i+1}^{k+1} \int_s^t \prod_s^x (\mat{I}+\mat{A}_{ii}(u)\,\dd u)\mat{A}_{ij}(x)\mat{B}_{j,k+1}(x,t)\,\dd x ,\]
we see that $\mat{A}_{ij}(x)$ will produce a scaling factor $\exp (-(j-i)\int_s^x r(u)\,\dd u)$,
while $\mat{B}_{j,k+1}(x,t)$ can be written as another integral over $x$ to $t$ with integration variable $y$, say, which will then have scaling factors (induction) of size $\exp (-(k-j+1)\int_s^y r(u)\,\dd u)$. Now write
\[  \exp \left(-(k-j+1)\int_s^y r(u)\,\dd u\right) = \exp \left(-(k-j+1)\int_s^x r(u)\,\dd u\right)\exp \left(-(k-j+1)\int_x^y r(u)\,\dd u\right) \]
and pull out the factor $\exp (-(k-j+1)\int_s^x r(u)\,\dd u)$ to get a scaling factor of 
\[   \exp \left(-(j-i)\int_s^x r(u)\,\dd u\right)\exp \left(-(k-j+1)\int_s^x r(u)\,\dd u\right) = \exp \left(-(k-i+1)\int_s^x r(u)\,\dd u\right)  . \]
This scaling factor can then be put together with $\prod_s^x (\mat{I}+\mat{A}_{ii}(u)\,\dd u)$, which in turn is the 
$(i,i)$ block level entrance which is simply $\prod_s^x (\mat{I}+\mat{M}(u)\,\dd u)$.
\end{proof}
We may then obtain a slightly generalised version of Hattendorff's theorem.
  \begin{theorem}
\[  \frac{\partial}{\partial s}\mat{V}^{(k)}(s,t) = \left( kr(s)\mat{I} -\mat{M}(s)  \right) \mat{V}^{(k)}(s,t) - k\mat{R}(s)\mat{V}^{(k-1)}(s,t) - \sum_{i=2}^k {k \choose i} \mat{C}^{(i)}(s)\mat{V}^{(k-i)}(s,t) ,  \]
with terminal condition $\mat{V}^{(k)}(t,t)=\mat{0}$. 
\end{theorem}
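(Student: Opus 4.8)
The plan is to read the differential equation directly off the matrix identity of Theorem~\ref{th:main-reserve}, using the defining (Kolmogorov) property of the product integral recalled in Section~\ref{sec:background}. Recall from that theorem that $\mat{V}^{(j)}(s,t)=\mat{G}^{(k)}_{k+1-j,k+1}(s,t)$, where $\mat{G}^{(k)}(s,t)=\prod_s^t(\mat{I}+\mat{F}^{(k)}_U(x)\,\dd x)$; in particular $\mat{V}^{(k)}(s,t)$ is the top-right $p\times p$ block of $\mat{G}^{(k)}(s,t)$, and more generally the $\ell$-th block of the last block column of $\mat{G}^{(k)}(s,t)$ equals $\mat{V}^{(k+1-\ell)}(s,t)$, with the convention $\mat{V}^{(0)}(s,t)=\mat{P}(s,t)$.

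First I would invoke the Kolmogorov forward equation for the product integral, which gives $\frac{\partial}{\partial s}\mat{G}^{(k)}(s,t)=-\mat{F}^{(k)}_U(s)\mat{G}^{(k)}(s,t)$ with $\mat{G}^{(k)}(t,t)=\mat{I}$. Extracting the $(1,k+1)$ block of both sides and using that $\mat{F}^{(k)}_U(s)$ is block upper triangular, the only blocks of $\mat{G}^{(k)}(s,t)$ that enter on the right are those in the last block column, i.e.\ the matrices $\mat{V}^{(j)}(s,t)$. Reading the first block row of $\mat{F}^{(k)}_U(s)$ off \eqref{eq:F-gen-res} — namely $\mat{M}(s)-kr(s)\mat{I}$ in position $(1,1)$, ${k\choose 1}\mat{R}(s)$ in position $(1,2)$, and ${k\choose i}\mat{C}^{(i)}(s)$ in position $(1,i+1)$ for $i=2,\dots,k$ — and pairing block $(1,i+1)$ with block $(i+1,k+1)=\mat{V}^{(k-i)}(s,t)$, one obtains
\[ \frac{\partial}{\partial s}\mat{V}^{(k)}(s,t) = -\Bigl[(\mat{M}(s)-kr(s)\mat{I})\mat{V}^{(k)}(s,t) + k\mat{R}(s)\mat{V}^{(k-1)}(s,t) + \sum_{i=2}^{k}{k\choose i}\mat{C}^{(i)}(s)\mat{V}^{(k-i)}(s,t)\Bigr], \]
which is exactly the claimed identity after distributing the minus sign. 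The terminal condition is immediate: $\mat{G}^{(k)}(t,t)=\mat{I}$, and the $(1,k+1)$ block of the identity matrix is $\mat{0}$ for $k\ge 1$.

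I do not expect any genuine obstacle here; the result is essentially a corollary of Theorem~\ref{th:main-reserve}. The only points needing a little care are (i) matching the binomial coefficients and block positions in $\mat{F}^{(k)}_U$ against the shifted labelling $\mat{V}^{(k+1-\ell)}$ of the last block column, and (ii) noting that the block-triangular structure of $\mat{F}^{(k)}_U(s)$ is what prevents any other blocks of $\mat{G}^{(k)}$ from intervening — both already implicit in the proof of Theorem~\ref{th:main-reserve}. As an alternative route, one could instead differentiate in $s$ the explicit integral representation of $\mat{V}^{(k)}(s,t)$ obtained by redistributing the discount factors (the discounted analogue of \eqref{eq:moment-integrals}, exactly as in Example~\ref{ex:2-moment}), using the Leibniz rule together with $\frac{\partial}{\partial s}\prod_s^x(\mat{I}+\mat{A}(u)\,\dd u)=-\mat{A}(s)\prod_s^x(\mat{I}+\mat{A}(u)\,\dd u)$; this yields the same recursion but with more bookkeeping, so I would only fall back on it if the block-extraction argument needed more detail.
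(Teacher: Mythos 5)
Your proposal is correct and follows essentially the same route as the paper: the paper's proof likewise differentiates the product integral in $s$ to get a Kolmogorov-type equation and then matches the first block row of the coefficient matrix against the last block column of the product integral, which is exactly your block-extraction argument with the details written out. The only cosmetic difference is that you correctly name the discounted matrix $\mat{F}^{(k)}_U$ where the paper's terse proof writes $\mat{F}^{(k)}$ (the label \texttt{eq:F-gen-res} is reused for both in the source), but this does not affect the substance.
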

\begin{proof}
Follows from differentiation of $\prod_s^t (\mat{I} + \mat{F}^{(k)}(x)\,\dd x)$ with respect to $s$, obtaining a Kolmogorov type of differential equation, with $\mat{F}^{(k)}(x)$ given by 
\eqref{eq:F-gen-res}, and comparing to \eqref{eq:symbolically}. We only need the first block row of $\mat{F}^{(k)}(x)$ and the last block column of  $\prod_s^t (\mat{I} + \mat{F}^{(k)}(x)\,\dd x)$.
\end{proof}
This theorem reduces to the state--wise standard Hattendorff theorem for $k$th order moments, which is achieved by post--multiplying the differential equation by the vector $\vect{e}=(1,1,...,1)^\prime$.

\section{Gram-Charlier expansions of the full distribution}
The c.d.f.\ or density of $X=U(s,T)$ can of course be evaluated by Laplace transform inversion 
 from Theorem~\ref{Th:4.5a}, say
implementing via the Euler or Post--Widder methods (see \cite{Abate1995}). However, the procedure is somewhat tedious and given the availability of all moments, an
attractive alternative is Gram-Charlier expansions via orthogonal polynomials.

The method can briefly be summarized as follows. Consider a reference density $f_0(x)$ having all moments
$\int x^kf_0(x)\,\dd x$ well-defined and finite, and a target density $f(x)$
for which all moments $\Exp X^k=$ $\int x^kf(x)\,\dd x$ can be computed. Consider $L_2(f_0)$ with inner product
$\langle g,h\rangle=$ $\int g(x)h(x)f_0(x)\,\dd x$ and let $p_0(x), p_1(x),\ldots$ be a set of orthonormal
polynomials, i.e.\ $\langle p_n,p_m\rangle=$ $\delta_{nm}$. If this set is complete in $L_2(f_0)$ and
\begin{equation}\label{5.4a}
f/f_0\in L_2(f_0)\,,\quad\text{i.e.}\ \ \int \frac{f^2(x)}{f_0(x)}\,\dd x\,<\,\infty\quad\text{or
equivalently }f^2/f_0\in L_1(\text{Leb})\\,,\end{equation}
we can then expand $f/f_0$ in the $p_n$ to get
\begin{equation}\label{5.4b} f(x)\ =\ f_0(x)\Bigl\{1+\sum_{n=3}^\infty c_np_n(x)\Bigr\}\quad\text{where}\ \ 
c_n\,=\, \langle f/f_0,p_n\rangle \,=\,\Exp p_n(X)\,.\end{equation}
If the emphasis is on the c.d.f.\ $F$ or the quantiles, simply integrate this to get an expansion of $F(x)$.

For fast convergence of the series \eqref{5.4b}, $f_0$ should be chosen as much alike $f$ as possible.
The most popular choice   is the normal density with the same mean $\mu$ and variance $\sigma^2$ as
$f$, in which case $c_1=c_2=0$ (one has always $c_0= 1$). This implies $p_n(x)=$
$H_n\bigl((x-\mu)/\sigma\bigr)/\sqrt{n!}$ for $n\ge 1$ where $H_n$ is the $n$th (probabilistic)
Hermite polynomial defined by $(\dd^n/\dd x)^n\e^{-x^2/2}$ $=(-1)^nH_n(x)\e^{-x^2/2}$. In particular, 
with \[d_n=\frac{1}{n!}\int_{-\infty}^\infty H_n((x-\mu)/\sigma\bigr)f(x)\,\dd x\]
we have
\begin{align}\label{5.4cf} f(x)\ &=\ \frac{1}{\sigma\sqrt{2\pi}}\e^{(x-\mu)^2/2\sigma^2}
\Bigl\{1+\sum_{n=3}^\infty d_nH_{n}\bigl((x-\mu)/\sigma\bigr)\Bigr\}\,,\\ \label{5.4c}
F(x)\ &=\ \Phi\bigl((x-\mu)/\sigma\bigr)- \frac{1}{\sigma\sqrt{2\pi}}\e^{(x-\mu)^2/2\sigma^2}\sum_{n=3}^\infty d_nH_{n-1}\bigl((x-\mu)/\sigma\bigr)\,.
\end{align}
The conditions for \eqref{5.4c} to be a valid expansion are in fact just 
\begin{equation}\label{4.5a}
f/f_0^{1/2}\in L_1(\text{Leb})\,,\quad\text{i.e.}\ \ \int \e^{(x-\mu)^2/4\sigma^2}f(x)\,\dd x\,<\,\infty\,,\end{equation}
cf.\ \cite[p.\,223]{cramer1946}.
Truncated versions of \eqref{5.4b} or \eqref{5.4c}  go under the name of Edgeworth expansions;
the examples with the whole series not  converging  simply arise when conditions \eqref{5.4a} or
\eqref{4.5a} is violated (whereas completeness holds when $f_0$ is normal).
See, e.g., \cite{Szego39}, \cite[p.\,133,\,222ff.]{cramer1946} and \cite{Barndorff2013asymptotic} for more detail. Actuarial applications of the method can be found, e.g., in \cite{Bowers1966}, \cite{ALBRECHER2001345},
\cite{GOFFARD2016499}, \cite{Goffard2017}, \cite{Goffard2019a}, \cite{Goffard2019b}.

\subsection{The insurance implementation}
When implementing the method in the insurance context $X=U(s,T)$, 
a difficulty is that absolute continuity typically fails.
More precisely, the target distribution $F$ will be a mixture 
of atoms at $a_1,a_2,\ldots$, with probability
$q_i$ for $a_i$, and a part having a  density. 
One then has to take $f(x)$ as the density of the absolutely continuous part,
\[ f(x)\ =\ \Prob\bigl(X\in \dd x\,\big|\,X\ne a_1, X\ne a_2, \ldots\bigr)\]
Most often, there is only one atom with $q_1$ easily computable.
Examples of atoms:\\
1) the initial state $i$ is held throughout $(s,T]$, occuring w.p.\ $q_i=\int_s^T \e^{\int_s^T\mu_ii}$, so that
$a_1=$ $\int_s^T \e^{-r(u-s)} b^i(u)\,\dd  u$.\\
2) No discounting and equal lump sum payments, $b^{ij}(t)\equiv b^{ij}$, $r=0$. 
Then $U(s,T)$ is a linear combination of the $b^{ij}$.\\
These are more or less the only natural ways to get atoms that occur to us,
but see Remark~\ref{Rem:24.4a} below. \\
For simplicity of notation, we assume there is only one atom and write $a=a_1$, $q=q_1$
(the modifications in the case of several atoms are trivial).

To implement the Gram-Charlier expansion of $f$, define
\[m_1\,=\,\int_{-\infty}^\infty xf(x)\,\dd x\,,\quad m_k\,=\,\int_{-\infty}^\infty (x-m_1)^jf(x)\,\dd x\\,, j=2,3,\ldots\]
Obviously,
\begin{equation}\label{23.4a} \Exp\bigl[U(s,T)-m_1\bigr]^j\ =\ q(a-m_1)^j+(1-q)m_j, \ \ \  j\geq 2
\end{equation}
whereas for $j=1$, 
\begin{equation}\label{23.4b} \Exp\bigl[U(s,T)-m_1\bigr]\ =\ q(a-m_1) .
\end{equation}
The program for computing an $\alpha$-quantile $z_\alpha$ is then the following:
\begin{enumerate}
\item Compute $\Exp U(s,T)$ via Theorem 7.1 with $k=1$ 
and compute \[m_1\,=\,\frac{\Exp U(s,T) -qa}{1-q}\,.\]
\item  Choose $k>1$ and compute $\Exp\bigl[U(s,T)-m_1\bigr]^j$  for $j=2,\ldots,k$ via Theorem 7.1. To this end, replace the drift parameter
$a_i(t)$ by $a_i(t)-m_1/(T-s)$. Solve next \eqref{23.4a} for the $m_j$ to get
\[ m_j= \frac{\Exp\bigl[U(s,T)-m_1\bigr]^j-q(a-m_1)^j}{1-q} . \]
\item Take $f_0$ as the normal density with mean $m_1$ and variance $\sigma_f^2=m_2-m_1^2$. 
Write $H_n(x)=\sum_0^n a_{j;n}x^j$ and compute the
\[d_n\ =\ \frac{1}{n!}\sum_{j=0}^n\frac{a_{j;n}m_j}{\sigma^j}\,,\quad n=3,\ldots,k\]
\item Approximate the conditional density $f$ and the unconditional  c.d.f.\ $F$ by
\begin{align*} \widehat f_k(x)\ &=\ f_0(x)\Bigl\{1+\sum_{n=3}^k d_nH_{n}\bigl((x-\mu)/\sigma\bigr)\Bigr\}
\,,\\ \widehat F_k(x)\ &=\
q1_{x\ge a}+(1-q)\Biggl[\Phi\bigl((x-\mu)/\sigma\bigr)-f_0(x)\sum_{n=3}^\infty 
d_nH_{n-1}\bigl((x-\mu)/\sigma\bigr)\Biggr]\,.
\end{align*}
\item Solve $\widehat F_k(z_\alpha^k)=\alpha$ to get a candidate $z_\alpha^k$ for $z_\alpha$
\item Repeat from step 2) with a larger $k$ until $z_\alpha^k$ stabilizes.
\end{enumerate}

At the formal mathematical level, one needs to verify \eqref{5.4a}. This seems easier for feed forward systems, since then $U(s,T)$ has finite support and because a normal $f_0$ is bounded below on compact intervals,
it would suffice that $f$ is bounded. This may occur highly plausible, but 
Remark~\ref{Rem:24.4b} below shows that in fact \eqref{5.4a} may fail in complete generality.
The following, result seems, however, sufficient for all practical purposes.
We call a model with constant intensities $\mu_{ij}(t)\equiv \mu_{ij}$ \emph{feed-forward} if there are no loops,
i.e.\ no chain $i_0i_n\ldots i_N$ with $i_0=i_N$ and all $\mu_{i_{n-1}i_n}>0$.

\begin{theorem}\label{Th:29.4a} Assume as in Section x that all intensities and rewards are piecewise constant,
that the distribution of $U(s,T]$ has an absolutely continuous part with conditional density $f$ 
and that $f_0$  is normal$(\mu,\sigma^2)$.. 
Then the $L_2$ condition \eqref{5.4a} holds for $f$ if either \emph{(i)} the model  is feed-forward, 
\emph{(ii)}  $b^{ij}_k=0$ for all $i\ne j$ or, more generally,
\emph{(iii)} $|b^{ij}_k|>0$ implies that there is no path from $j$ to $i$, 
i.e.\ no chain $i_0i_n\ldots i_N$ with $i_0=j,i_N=i$ and all $\mu_{i_{n-1}i_n}>0$.
\end{theorem}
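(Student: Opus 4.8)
The plan is to reduce \eqref{5.4a} to two facts: \emph{(a)} $U(s,T)$ has bounded support, and \emph{(b)} the absolutely continuous part of the law of $U(s,T)$ has a square--integrable density. Granting these, choose a compact interval $[\ell,L]$ containing the support of $U(s,T)$. Since $f_0$ is normal it is continuous and strictly positive, hence bounded below by some $\varepsilon>0$ on $[\ell,L]$, while $f$ vanishes off $[\ell,L]$; therefore
\[
\int\frac{f^2(x)}{f_0(x)}\,\dd x\ =\ \int_\ell^L\frac{f^2(x)}{f_0(x)}\,\dd x\ \le\ \frac1\varepsilon\int_\ell^L f^2(x)\,\dd x\ <\ \infty ,
\]
which is \eqref{5.4a}. (Boundedness of $f$ would do as well, but working with the $L^2$ norm is more convenient, since it is subadditive while the $L^\infty$ norm is not.)

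For \emph{(a)}: on the finite horizon $[s,T]$ the piecewise constant data $b^i(\cdot)$, $b^{ij}(\cdot)$, $r(\cdot)$ are bounded, so the continuous contribution $\int_s^T\e^{-\int_s^u r(x)\,\dd x}b^{Z(u)}(u)\,\dd u$ always lies in one fixed bounded interval, whatever the path of $Z$ is. Hence it suffices to bound the number of transitions $i\to j$ carrying a non--zero lump sum $b^{ij}$ that can occur in $[s,T]$. Under \emph{(ii)} there are none. Under \emph{(iii)} --- which contains \emph{(i)} and \emph{(ii)}, a feed--forward graph having no loops at all --- a transition $i\to j$ with $b^{ij}\ne0$ admits no return path from $j$ to $i$, so once it is taken $Z$ never re--enters $i$ and hence never takes it again; as there are at most $p(p-1)$ ordered pairs, the total lump--sum contribution is bounded uniformly in the path, and $U(s,T)$ has bounded support. (Diagonal, Poisson--arrival lump sums $b^{ii}$ are not controlled by \emph{(i)}--\emph{(iii)}; in the insurance setting considered here they are absent, and Remark~\ref{Rem:24.4b} indicates \eqref{5.4a} can genuinely fail in full generality.)

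For \emph{(b)}, the heart of the matter, I would split the absolutely continuous part of the law of $U(s,T)$ over the path \emph{skeletons} $\omega$ --- the ordered sequence of visited states together with the number $m=m(\omega)$ of jumps --- each of probability $w_\omega\le R^m(T-s)^m/m!$ with $R$ a common bound for all transition intensities, noting that $m$ is unbounded only through revisits along $b^{ij}=0$ transitions. Conditionally on $\omega$ the jump times live on the simplex $\{s<t_1<\cdots<t_m<T\}$ with a joint density $\le R^m$, and $U(s,T)=L_\omega(t_1,\dots,t_m)$, where on each of the finitely many sub--simplices cut out by the breakpoints of the data one computes $\partial L_\omega/\partial t_\ell=\e^{-\int_s^{t_\ell}r(x)\,\dd x}\bigl[b^{i_{\ell-1}}(t_\ell)-b^{i_\ell}(t_\ell)-r(t_\ell)b^{i_{\ell-1}i_\ell}(t_\ell)\bigr]$, a function of constant sign, so $L_\omega$ is monotone in each coordinate there. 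On such a sub--simplex $L_\omega$ is either constant --- contributing an atom, which is discarded --- or the bracket equals one of the finitely many non--zero values built from the data, so $|\partial L_\omega/\partial t_\ell|\ge c_0$ for a constant $c_0>0$ independent of $\omega$ (the discount factor stays in a fixed compact subinterval of $(0,\infty)$), whence $|\nabla L_\omega|\ge c_0$. A standard coarea estimate then bounds the $L^2$ norm of the density $f_\omega$ produced by that piece by $C_m/w_\omega$, where $C_m$ depends on $m$ only and, up to a polynomial factor in $m$, is at most $C^m/\sqrt{m!}$ for a fixed constant $C$; thus $w_\omega\|f_\omega\|_{L^2}\le C_m$, and summing over the at most $p^{m+1}$ skeletons with $m$ jumps and then over $m$ leaves a convergent series. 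By subadditivity of $\|\cdot\|_{L^2}$ this bounds $\|f\|_{L^2}$, which is \emph{(b)}.

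The step I expect to be the real obstacle is the \emph{uniform} non--degeneracy $|\nabla L_\omega|\ge c_0$, uniform over the countably many skeletons. This is precisely where ``piecewise constant'' is used (only finitely many coefficient values occur, the non--zero ones bounded away from $0$, and the discount factors stay away from $0$ and $\infty$) together with the presence of a continuous payment stream (which forces $L_\omega$ to be non--constant on the relevant pieces, making them genuinely absolutely continuous rather than atomic). Once this is in hand, the sub--simplex bookkeeping, the coarea bound, and the series estimate are routine, and the reduction in the first paragraph closes the proof.
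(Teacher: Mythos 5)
Your proposal is correct and follows the same overall strategy as the paper: show that under (i)--(iii) the support of $U(s,T)$ is bounded, observe that a normal $f_0$ is bounded below on any compact interval so that \eqref{5.4a} reduces to an integrability property of $f$ alone, and then control $f$ by expanding over paths and exploiting the $1/N!$ volume of the ordered-jump-time simplex to sum the series. Where you differ is in the execution, and mostly to your advantage. The paper reduces to $T=1$ by a convolution bound on state-wise densities and then, for the case all $b^{ij}=0$, bounds the contribution of a length-$N$ path by $\overline\mu^N/N!$ via an integral containing the indicator ${\bf 1}(a_1h_1+\cdots+a_Nh_N=x)$ --- an expression that is literally zero as a Lebesgue integral and really stands for a disintegration along level sets; your coarea argument with the explicit gradient formula $\partial L_\omega/\partial t_\ell=\e^{-\int_s^{t_\ell}r}\bigl[b^{i_{\ell-1}}(t_\ell)-b^{i_\ell}(t_\ell)-r(t_\ell)b^{i_{\ell-1}i_\ell}(t_\ell)\bigr]$ and the uniform lower bound $|\nabla L_\omega|\ge c_0$ (available precisely because the data are piecewise constant) is the rigorous version of that step, and correctly identifies it as the crux. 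You also work in $L^2$ rather than $L^\infty$ and replace the $T=1$ reduction by cutting each simplex along the breakpoints of the data, which handles piecewise constancy directly. Finally, your skeleton decomposition covers the case of non-zero transition lump sums, which the paper's written proof never returns to after the sentence ``Assume first that all $b^{ij}=0$'', and your observation that the diagonal Poisson lump sums $b^{ii}$ are not controlled by (i)--(iii) points at a genuine gap in the theorem's hypotheses that the paper's proof shares but does not acknowledge. The one place you should still write out details is the $\mathcal{H}^{m-1}$-measure bound on hyperplane sections of the sub-simplices that yields $C_m\le C^m/\sqrt{m!}$ up to polynomial factors, but this is routine.
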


\begin{proof} Let $\widetilde f(x)$ be the unconditional density and 
\[F(A\,|\,s,t;i,j)\ =\ \Prob\Bigl(\int_s^t \e^{-\int_s^u r(u)\,\dd u} \dd B(u),\, Z_t=j\,\Big|\,Z_s=i\Bigr)\ =\
F^{\mathrm{ac}}(A\,|\,s,t;i,j)+F^{\mathrm{at}}(A\,|\,s,t;i,j)\]
where $F^{\mathrm{ac}},F^{\mathrm{at}}$ are the absolutely, resp.\ atomic parts (due to the special structure,
there can be no singular but non-atomic part). Let $g(x\,|\,s,t;i,j)$ be the density of $F^{\mathrm{ac}}$
so that $\widetilde f(x) =$ $\sum_j g(x\,|\,0,T;i,j)$ when $Z_0=i$. Let
$\overline g(s,t;i,j)=$ $\sup_x g(x\,|\,s,t;i,j)$.
and assume  it shown that $\overline g(0,T-1;i,j)<\infty$, $\overline g(T-1,T;i,j)<\infty$. We then get
\begin{align*}g(0,T;i,j)\ &=\ \sum_k\int g(x-y\,|\,0,T-1;i,k)F(\dd y\,|\,T-1,T;k,j)\\
&\qquad\qquad + \sum_k\int g(x-y\,|\, T-1,T;k,j)F(\dd y\,|\,0,T-1;i,k)\\ &\le \
 \sum_k \overline g(0,T-1;i,k)\int F(\dd y\,|\,T-1,T;k,j)+ \sum_k\overline g(T-1,T;k,j)\int F(\dd y\,|\,0,T-1;i,k)
\end{align*}
so that  $\overline g(0,T;i,j)<\infty$.
Thus we may assume that $T=1$ and simply write $\mu_{ij,k}=\mu_{ij}$ etc.

The stated conditions imply in all three cases that $f$ has finite support. Indeed, the support is contained in
$[-A,A]$ where $A=p\max|b^i|+p(p-1)\max|b^{ij}|$ in cases (i) or (iii) and $A=p\max|b^i|$ in case (ii).
Since $f_0$ is bounded below on $[-A,A]$ for any $A$, it thus suffices to show that $f$ is bounded.

Assume first that all $b^{ij}=0$, , and let $x$ be fixed. 
Define $S_N\subset [0,1]^n$ as $S_N=$ $\{0<t_1<\cdots<t_N<1\}$ and $h_n=t_{n}-t_{n-1}$ for $0<n<N$, 
$h_0=t_1$, $h_N=1-t_N$.
A path $Z_0=i_0i_1\ldots i_N=Z_1$ contributes to $\widetilde f(x)$ only if $N>0$ and then by 
\begin{align*}\MoveEqLeft \int_{S_N}\Bigl[\prod_{n=1}^{N} \e^{\mu_{i_{n-1}i_{n-1}}h_n}\mu_{i_{n-1}i_{n}}\dd t_n\Bigr]\cdot \e^{\mu_{i_Ni_N}h_N}\cdot
{\bf 1}(a_1h_1+\cdots+a_Nh_N=x)\\ &\le 
\int_{S_N} \prod_{n=1}^{N}\mu_{i_{n-1}i_{n}}\dd t_n \ \le\ \overline\mu^n \cdot\text{Leb}(S_n)= 
\ =\ \frac{\overline\mu^N}{N!}
\end{align*}
where $\overline\mu^n=\max \mu_{ij}$. Thus the contribution from all paths of length $N$ is at most 
$p^N\mu^N{N!}$. Summing over $N$ gives the bound $\e^{p\overline\mu}$ for $f(x)$ which is independent of $x$.
\end{proof}

\begin{remark}\label{Rem:28.4a}\normalfont
Condition (iii) is not far from being necessary. Consider as an example 
the disability model with states 0:\,active, 1:\,disabled, 2:\,dead and recovery
in the interval time interval $[0,1]$ withthe
same intensity $\lambda>0$ for transitions from 0 to 1 as from 1 to 0 and (for simplicity)
mortality rate $0$ and discounting rate $r=0$. The benefits are a lump sum $b^{01}=1$ upon transition from 0 to 1
and the contributions a constant payment rate $b^0<0$ when active.

When $Z_0=0$, the total number $N$ of transitions in $[0,1]$ is Poisson$(\lambda)$, the total benefits $U_1(0,1)$
are $M=\lceil N/2\rceil$, and the total contributions $U_0(0,1)$ equal $|b^1|$ times the total time $T_0$ spent in state 0.
Thus $U(0,1)=$ $U_1(0,1)-U_0(0,1)$ $=M-U_0(0,1)$
Obviously, $U_0(0,1)$ is concentrated on the interval $[0,|b^1|]$ with a density $g(x)$ which is bounded away from
0, say $h(x)>c_1>0$. 

Assuming, again for simplicity, that $|b^1|>1$, the intervals $[m-[0,|b^1|],m]$ overlap and so for a given $x$, at least one of them contribute to $f(x)$. One candidate is the one with $m=\lceil x\rceil]$. This gives
\begin{align*}f(x)\ &\ge\ \Prob(M=\lceil x\rceil)h(\lceil x\rceil-x)\ \ge\ \Prob(N=2\lceil x\rceil)c_1\,.
\end{align*}
But using Stirling's approximation to estimate the Poisson probability, it follows after a little calculus that
$\Prob(N=2\lceil x\rceil)\ge$ $c\e^{-4x\log x}$ for all large $x$, say $x\ge x_0$, and some $c>0$
(in fact the 4 can be replaced by any $c_3>2$). This gives for a normal$(\mu,\sigma^2)$ $f_0$ that
\[\int\frac{f^2}{f_0}\ \ge\ \int_{x_0}^\infty\frac{f^2(x)}{f_0(x)}\,\dd x \ \ge\ 
c_4\int_{x_0}^\infty\exp\{(x-\mu)/\sigma^2-8x\log x\}\,\dd x\ =\ \infty\,.\]
That is, \eqref{5.4a} fails.

The obvious way out is of course to take $f_0$ with a heavier tail than the normal, say doubly exponential
(Laplacian) with density $\e^{-|x|}$ for $-\infty<x<\infty$. Given that this example and other 
cases where condition (iii) is violated do not seem very realistic, we have not exploited this further.
\end{remark}
\begin{example}\label{Ex:29.4a}\normalfont
\disabfig

Consider again the disability model with states 0:\,active, 1:\,disabled, 2:\,dead and no recovery.
As in BuchM, we assume that the payment stream has the form $b^0(t)=-b_-^0$ for $t\le S$
and $b^0(t)=b_+^0$ for $t>S$,\footnote{Here and in the following, a - subscript mimics `before $S$' and a + `after $S$'.}
$b^1(t)\equiv b^1$ for all $t$. For example $S$ could be the retirement age, say 65. Without recovery, the only non-zero transition rates
are the $\mu_{01}(t)$, $\mu_{02}(t)$, $\mu_{12}(t)$. With the values used in BuchM, these are 
bounded away from 0 and $\infty$ on $[0,T]$ for any $T$ (say 75 or 80), and this innocent assumption is all that matters for the following.

Define the stopping times
\begin{align*}
\tau_0^1\ &=\ \inf\{t>s:\, Z_t=1,\,Z_s=0\text{ for }s<t\}\\
\tau_1^2\ &=\ \inf\{t>\tau_0^1:\, Z_t=2\}\\
\tau_0^2\ &=\ \inf\{t>s:\, Z_t=2,\,Z_s=0\text{ for }s<t\}
\end{align*}
with the usual convention that the stopping time is $\infty$ if there is no $t$ meeting the requirement in the definition.
One then easily checks that the sets $F_0,\ldots,F_{1_+2_+}$ defined in Fig.~\ref{disabfig} defines a partition of the sample space.
Here $F_0$ (corresponding to zero transitions in $[s,T]$) contributes with an atom
at
\[a=-b_-^0(1-\e^{-rS })/r+b_+^0(\e^{- rS}-\e^{- rT})/r\text{\ \ with probability\ \ }
q=\exp\Bigl\{\int_0^T\mu_{00}(u)\,\dd u\Bigr\}\]
and the remaining 7 events with absolutely continuous parts, say with (defective) densities
$g_{2_-},\ldots,g_{1_+2_+}$. It is therefore sufficient to show that each of these is bounded.
In obvious notation, the contribution to $U(s,T)$ of the first 4 of these 7 events (corresponding to precisely one transition in $[s,T]$) are
\begin{gather*}
A_{2-}\ =\ \bigl[-b^0_-(1-\e^{-r\tau_0^2})/r\bigr]\cdot1_{\tau_0^2\le S}\,,\\
A_{2-}\ =\ \bigl[-b^0_-(1-\e^{-rS})/r+b_+^0(\e^{-rS}-\e^{-r\tau_0^2})/r\bigr]\cdot1_{S<\tau_0^2\le T}\,,\\
A_{1-}\ =\ \bigl[-b^0_-(1-\e^{-r\tau_0^1})/r+b^1(\e^{-rT}-\e^{-r\tau_0^1})/r\bigr]\cdot1_{\tau_0^1\le S}\,,\\
A_{1_+}\ =\ \bigl[-b^0_-(1-\e^{-rS})/r+b_+^0(\e^{-rS}-\e^{-r\tau_0^1})/r+b^1(\e^{-rT}-\e^{-r\tau_0^1})/r\bigr]\cdot1_{S<\tau_0^1\le T}\,.
\end{gather*}
The desired boundedness of $g_{2-},g_{2+},g_{1-},g_{1_+}$ therefore follows from
the following lemma, where $\tau$ may be improper ($\Prob(\tau=\infty)>0$) so that $\int h<1$:
\begin{lemma}\label{Lemma:24.4a} If a r.v.\ $\tau$ has a bounded density $h$ and the function $\varphi$ is monotone and differentiable with
$\varphi'$ bounded away from 0, then the density of $\varphi(\tau)$ is bounded as well.
\end{lemma}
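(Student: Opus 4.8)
The plan is a one--dimensional change of variables, made robust to the fact that $\varphi$ is only assumed differentiable (not $C^1$) and that $\tau$ may be improper. Write $\|h\|_\infty=\sup_t h(t)<\infty$ and let $c>0$ be a lower bound for $|\varphi'|$ on the interval $I$ on which $\varphi$ is defined; we may take $I$ to contain the support of $h$, since $\varphi(\tau)$ only depends on $\varphi$ there. First I would reduce to the increasing case: replacing $\varphi$ by $-\varphi$ merely reflects the law of $\varphi(\tau)$ about $0$ and does not change whether that law has a bounded density, so we may assume $\varphi$ non--decreasing, hence $\varphi'\ge c>0$ on $I$ and $\varphi$ is a strictly increasing bijection of $I$ onto $J=\varphi(I)$.

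The key step is to control the inverse $\psi=\varphi^{-1}\colon J\to I$. For $y_1<y_2$ in $J$, putting $t_i=\psi(y_i)$, the mean value theorem gives $y_2-y_1=\varphi(t_2)-\varphi(t_1)\ge c\,(t_2-t_1)$, i.e.
\[ \psi(y_2)-\psi(y_1)\ \le\ \frac{1}{c}\,(y_2-y_1). \]
Thus $\psi$ is non--decreasing and Lipschitz with constant $1/c$; this is precisely the regularity one needs, and it is obtained directly rather than via the inverse function theorem (which would require $\varphi'$ continuous). Consequently, for any $y$ and any $\varepsilon>0$, using $\{\varphi(\tau)\in(y,y+\varepsilon]\}=\{\tau\in(\psi(y),\psi(y+\varepsilon)]\}$ (valid since $\varphi$ is a bijection $I\to J$, the right-hand interval being empty when $(y,y+\varepsilon]\cap J=\emptyset$),
\[ \Prob\bigl(\varphi(\tau)\in(y,y+\varepsilon]\bigr)\ =\ \int_{\psi(y)}^{\psi(y+\varepsilon)}h(t)\,\dd t\ \le\ \|h\|_\infty\,\bigl(\psi(y+\varepsilon)-\psi(y)\bigr)\ \le\ \frac{\|h\|_\infty}{c}\,\varepsilon. \]

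Hence the distribution function of $\varphi(\tau)$ is Lipschitz with constant $\|h\|_\infty/c$, so $\varphi(\tau)$ is absolutely continuous with a density bounded by $\|h\|_\infty/c$; the possible atom $\Prob(\tau=\infty)$ of an improper $\tau$ contributes only a point mass "at infinity" and is irrelevant to this a.c.\ part. Equivalently, one may invoke the change--of--variables formula for absolutely continuous monotone maps to write the density of $\varphi(\tau)$ as $y\mapsto h(\psi(y))\,\psi'(y)\,1\{y\in J\}$ and bound it the same way, since $0\le\psi'\le 1/c$ a.e.\ by the Lipschitz estimate. I do not anticipate a genuine obstacle here; the only point meriting care is the one flagged above, namely deducing the Lipschitz regularity of $\psi$ from mere differentiability of $\varphi$, which the mean value theorem settles cleanly.
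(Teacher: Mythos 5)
Your proof is correct and rests on the same idea as the paper's: invert $\varphi$ and use the lower bound on $|\varphi'|$ to control the density of $\varphi(\tau)$, which the paper writes down directly as $h(\psi(x))/\bigl|\varphi'(\psi(x))\bigr|$. Your version merely adds rigor (deriving the Lipschitz bound on $\psi$ from the mean value theorem and bounding increments of the c.d.f.), which handles the case where $\varphi'$ is not assumed continuous, but the route is essentially identical.
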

\begin{proof} The assumptions imply the existence of $\psi=\varphi^{-1}$. Now just note that the density of $\varphi(\tau)$ is 
$h(\psi(x))/\varphi'(\psi(x))$ if $\varphi$ is increasing and $h(\psi(x))/\bigl|\varphi'(\psi(x))\bigr|$ if it is decreasing.
\end{proof}

The cases of $g_{1_-2_-},g_{1_-2_+},g_{1_+2_+}$. (corresponding to precisely two transitions in $(s,T]$) is slightly more intricate.
Consider first $g_{1_-2_-}$.
The contribution to $U(0,T)$ is here 
\[A_{1_-2_-}\ =\ -b_-^0\int_0^{\tau_0^1}\e^{-ru}\,\dd u\,+\, b^1\int_{\tau_0^1}^{\tau_1^2}\e^{-ru}\,\dd u\ =\ 
c_0+c_1\e^{-r\tau_0^1}-c_2\e^{-r\tau_1^2}\] 
Now the joint density $h(t_1,t_2)$ of
$(\tau_0^1,\tau_0^2)$ at a point $(t_1,t_2)$ with $0<t_1<t_2\le R$ is
\[\exp\Bigl\{\int_0^{t_1}\mu_{00}(u)\,\dd u\Bigr\}\mu_{01}(t_1)
\cdot\exp\Bigl\{\int_{t_1}^{t_2}\mu_{11}(v)\,\dd v\Bigr\}\mu_{12}(t_2)
\]
so that $h(t_1,t_2)$ is bounded. Consider now the transformation taking $(\tau_0^1,\tau_0^2)$ into 
$(\tau_0^1,A_{12_-})$. The inverse of the Jacobiant $J$ is
\[\begin{vmatrix} 1 & 0 \\ -rc_1\e^{-r\tau_0^1}&rc_2\e^{-r\tau_1^2}  \end{vmatrix}\ =\ rc_2\e^{-r\tau_1^2}\]
which is uniformly bounded away from 0 when $\tau_1^2\le S$. 
Therefore also the joint density $k(z_1,z_2)$ of
$(\tau_0^1,A_{12-})$ is bounded. 
Integrating $k(z_1,z_2)$w.r.t.\ $z_1$ 
over the finite region $0\le z_1\le S$ finally gives
that $g_{1_-2_-}$ is bounded.

The argument did not use that $\tau_1^2\le S$ and hence also applies to $g_{1_-2_+}$. Finally note that the contribution 
to $g_{1_+2_+}$ from $[0,S]$ is just a constant, whereas the one from $(S,T]$ has the same structure as used for $g_{1_-2_-}$.
Hence also $g_{1_+2_+}$ is bounded. 
\end{example}

\begin{remark}
\label{Rem:24.4a}\normalfont
The calculations show that $F_{1+}$ also is an atom if $b_+^0=b^1$. 
However, since the contribution rate $b_-^0$
is calculated via the equivalence principle 
given the benefits $b^0_+,b^1$ and the transition intensities, this
would be a very special case. The somewhat less special situation 
$b^0_+=b^1$ (same annuity to an disabled as to someone retired as active) 
also gives an atom, now at $F_{1+}$. In fact, $b_-^0=b^1$ is assumed
in BuchM.
\end{remark}

\begin{remark}
\label{Rem:24.4b}\normalfont
For a counterexample to \eqref{5.4a}, consider again the disability model with the only benefit being 
a lump sum of size $b^{01}(t)=$ $\e^{rt}\varphi(t)$ being paid out
at $\tau_0^1$ where $\varphi(t)=$ $(t-a)^21_{t\le a}+b$, cf.\ Fig.~\ref{varphiFig}.

\varphiFig
Then $U(s,T]=\varphi(\tau_0^1)$ has an atom at $b$ (corresponding to $\tau_0^1>a$)
and an absolutely continuous part on $(b,b+a^2]$. Letting $\psi:\,[b,b+a^2]\mapsto [0,a]$ 
be the inverse of $\varphi$,   $t=\psi(y)$
satisfies 
\[y=(t-a)^2+b\ \Rightarrow (y-b)^{1/2}=-(t-a)\ \Rightarrow \ t=\psi(y)=a-(y-b)^{1/2}\,,\]
where the minus sign after the first $\Rightarrow$ follows since $\psi$ is decreasing.
With $h$ the density of $\tau_0^1$, we get as in Lemma~\ref{Lemma:24.4a} that the conditional density $f$
of the absolutely continuous part is given by
\[f(y)\ =\ \frac{1}{\Prob(\tau_0^1\le a)}\frac{h(\psi(y))}{\bigl|\varphi'(\psi(y))\bigr|}\ =\ 
\frac{1}{\Prob(\tau_0^1\le a)}\frac{h(a-(y-b)^{1/2})}{(y-b)^{1/2}}
\]
But there are $c_1,c_2>0$ such that $f_0(y)\le c_1$ on $[b,b+a^2]$ and $h(t)\ge c_2$ on $[b,b+a^2]$, 
and so we get
\[\int\frac{f^2}{f_0}\ = \int_{b}^{b+a^2}\frac{f^2(y)}{f_0(y)}\,\dd y\ \ge\ \int_{b}^{b+a^2}\frac{c_2^2}{c_1(y-b)}\ =\ \infty\,,\]
meaning that \eqref{5.4a} does not hold.
\end{remark}


\begin{example}[The Markovian arrival process]
Our second main example is the number $X=N(t)$ of events before time $t$ in the Markovian arrival process.
This is a discrete r.v., but for the general theory one just needs to replace Lebesgue measure $\dd x$ in
\eqref{5.4a}, \eqref{5.4a} etc.\ above by counting measure on $\{0,1,2,\ldots\}$.

For $X=N(t)$, a candidate for the reference distribution could be the Poisson distribution
with the same mean $\lambda$ as $N(t)$ in stationarity. The orthogonal polynomials are then the
Charlier-Poisson polynomials 
\begin{equation}\label{8.4a}
p_n(x)\ =\ \lambda^{n/2}(n!)^{-1/2}\sum_{k=0}^n(-1)^{n-k}{n\choose k}k!\lambda^{-k}{x\choose k}\ =\ 
\lambda^{n/2}(n!)^{1/2}L_n^{(x-n)}(\lambda)
\end{equation}
where $L_n$ is the $n$th Laguerre polynomial,
cf.\ Szeg\"o~\cite{Szego39} pp.\ 34--35 and Schmidt~\cite{Schmidt33}.\end{example}

\section{A numerical example}
By \eqref{eq:piecewise-constant}, we may (essentially) assume without loss of generality that the parameters are constant. Thus we shall consider a disability--unemployment model, defined in terms of a time--homogeneous Markov process $\{ Z(t) \}_{t\geq 0}$, with state space 
$E=\{ 1,2,3,4,5\}$, where state 1 corresponds to active (premium paying), 2 unemployed, 3 disabled, 4 re--employed and 5 death. Death rates from all states are assumed to be the same  and equal to $0.5$, and all other possible transitions happen at rate $0.1$  as illustrated in Figure \ref{examfig}. The interest rate is $r=0.08$, and the only lump sum payment (of quantity 2) is when entering the disability state from either state $1$, $3$ or $4$. In state $1$ premium is paid at rate $1$ and in state $3$ a benefit of rate $1$ is obtained. A parametrisation of the model is given by 
\[ 
\mat{C}=
\begin{pmatrix}
-0.7 & 0 & 0 & 0 & 0 \\
0 & -0.5 & 0 & 0 & 0 \\
0 & 0  & -0.7 & 0 & 0\\
0 & 0 & 0 & -0.6 & 0 \\
0 & 0 & 0 & 0 & 0 &
\end{pmatrix}, \ \ 
\mat{D} =
\begin{pmatrix}
0 & 0.1 & 0.1 & 0 & 0.5 \\
0 & 0 & 0 & 0 & 0.5 \\
0 & 0.1 & 0 & 0.1 & 0.5 \\
0 & 0.1 & 0 & 0 & 0.5 \\
0 & 0 & 0 & 0 & 0 
\end{pmatrix}, \ \
\mat{B}=
\begin{pmatrix}
0 &  2  & 0 & 0 & 0 \\
     0 & 0&  0&  0&  0\\
     0 & 2 & 0&  0& 0 \\
     0 & 2& 0& 0 & 0 \\
     0& 0& 0& 0& 0
\end{pmatrix}
  \]
  and 
  \[\vect{b} = (-1,0,1,0,0) . \]
It is clear from this example that parametrisations may be ambiguous. Indeed, for all $d_{ij}>0$ with a corresponding $b^{ij}=0$ we could equally have moved these values into the $\mat{C}$ matrix instead. 
 \begin{figure}[H]
   \centering 
\begin{tikzpicture}[scale=0.7] 
\small
\filldraw[fill=blue!5!white,thick] (0,3) rectangle (1,4)    (3,3) rectangle (4,4) (6,3) rectangle (7,4) ;
\filldraw[fill=blue!5!white,thick] (6,0) rectangle (7,1) (6,6) rectangle (7,7) ;
\draw (0.5,3.5) node {$1$} (3.5,3.5) node {$2$}  (6.5,3.5) node {$4$} (6.5,0.5) node {$5$} (6.5,6.5) node {$3$};

\draw[thick,->] (0.5,4.3) -- (0.5,6.5) -- (5.7,6.5);   
\draw[thick,->] (0.5,2.7) -- (0.5,0.5) -- (5.7,0.5);   
\draw[thick,->] (7.3,6.5) -- (8.5,6.5) -- (8.5,0.5) -- (7.3,0.5);   
\draw[thick,->] (1.3,3.5) -- (2.7,3.5);   
\draw[thick,->] (5.7,3.5) -- (4.3,3.5);  
\draw[thick,->] (6.5,5.7) -- (6.5,4.3);  
\draw[thick,->] (6.5,2.7) -- (6.5,1.3);  
\draw[thick,->] (5.7,5.7) -- (4.3,4.3);   
\draw[thick,->] (4.3,2.7) -- (5.7,1.3);   
\draw (3.3,6.1) node {$\theta$} (3.3,0.8) node {$\mu$};
\draw (2,3.9) node {$\lambda$} (5,3.9) node {$\lambda$} 
 (8.8,3.5) node {$\mu$} ;
\draw (4.6,5.2) node {$\lambda$} (4.6,1.8) node {$\mu$};
\draw (6.8,5) node {$\eta$} (6.8,2) node {$\mu$};
\end{tikzpicture}
\caption{Disability-Unemployment Markov process with $\eta=\lambda=0.1$ and $\mu=0.5$}
   \label{examfig}
 \end{figure}
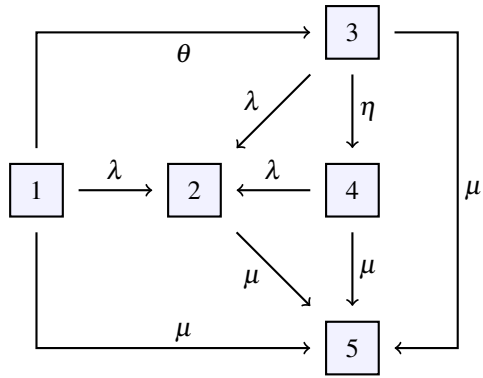

 \begin{figure}[h]
  \begin{minipage}[b]{0.4\textwidth}
   \hspace{-1cm}\includegraphics[scale=0.45]{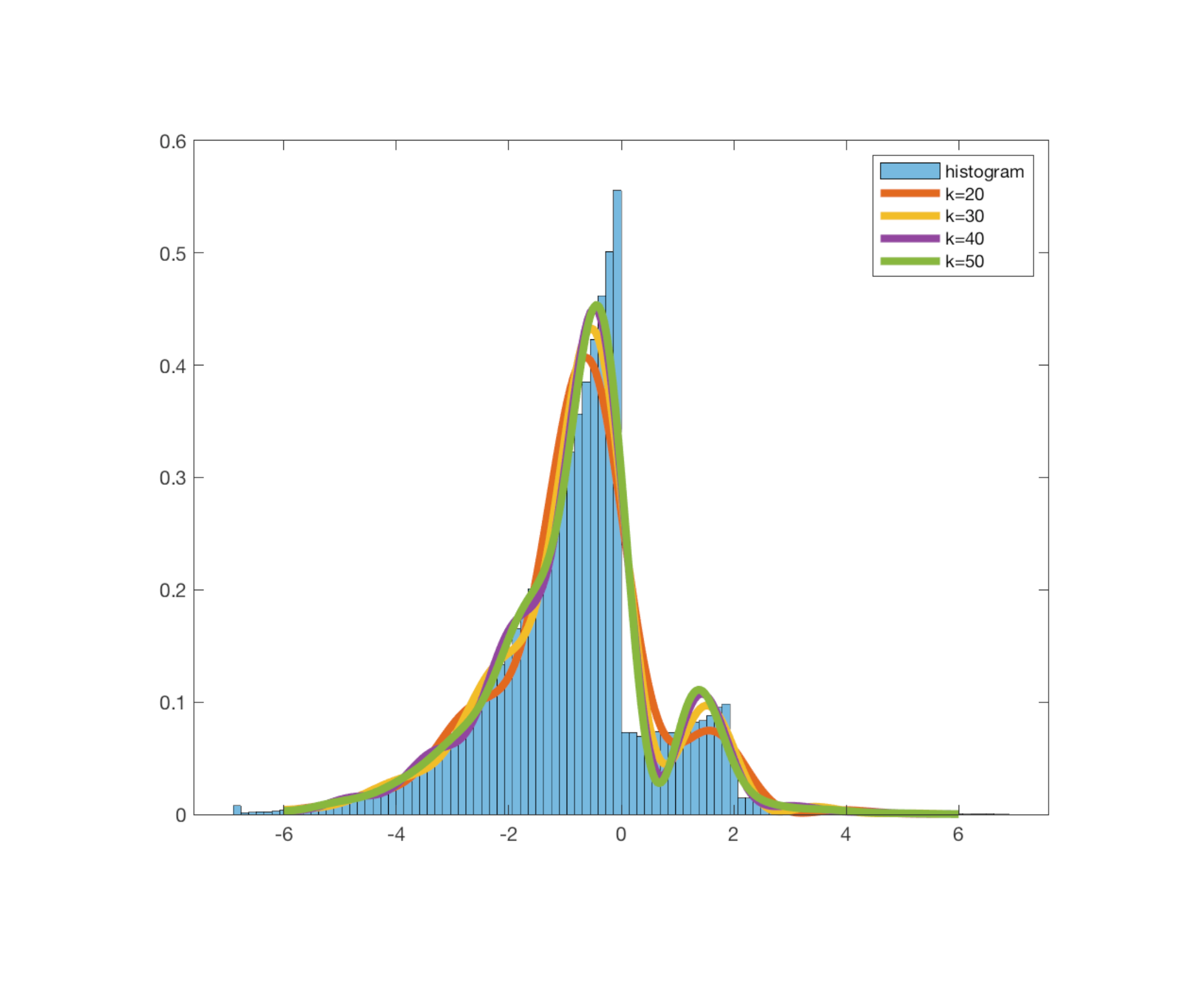}
  \end{minipage}\hspace{1cm}
  \begin{minipage}[b]{0.4\textwidth}
    \includegraphics[scale=0.45]{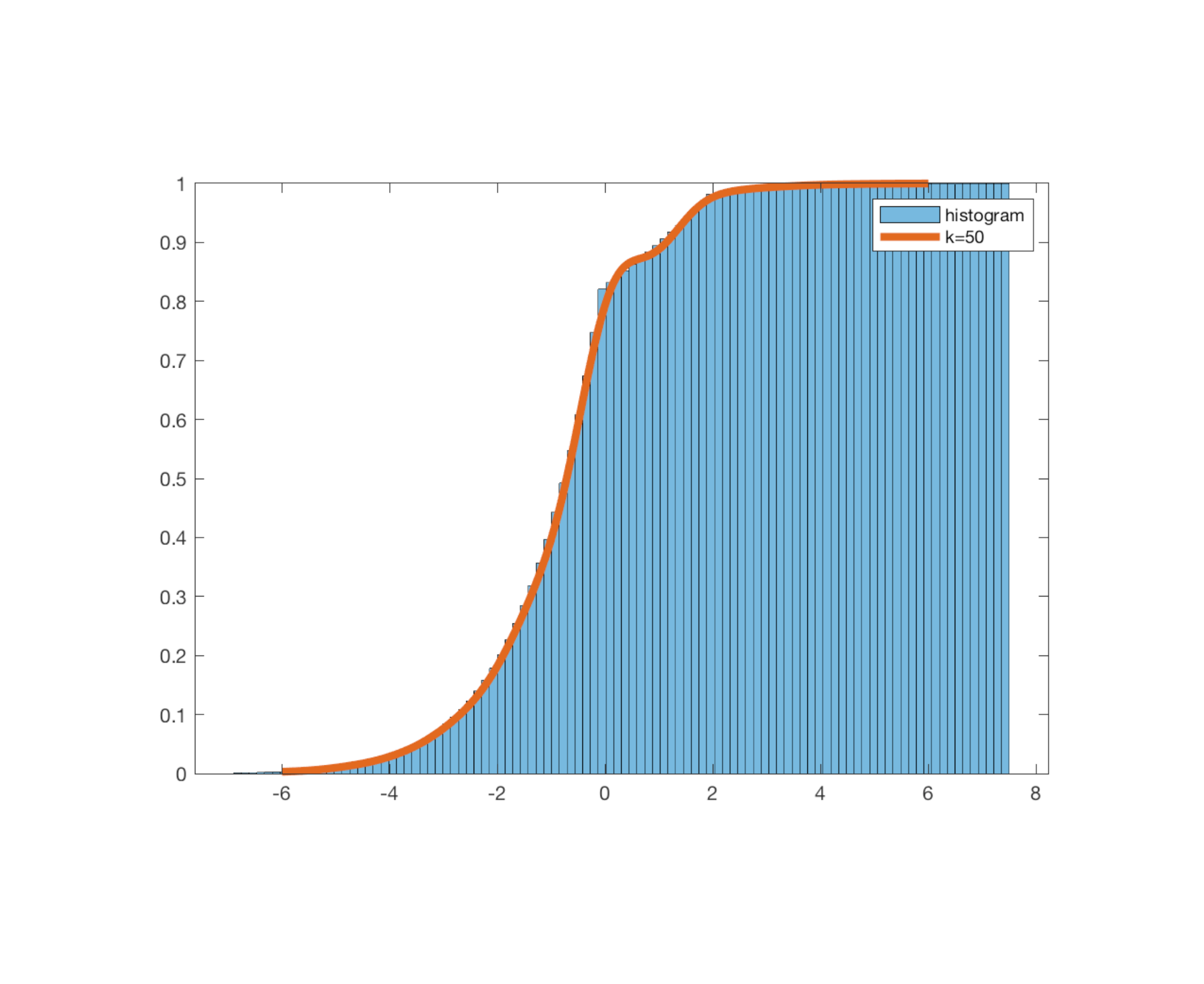}
    \end{minipage}
    \caption{Left: densities based on $k=20,30,40,50$ moments plotted towards a histogram based on 300,000 simulations. Right: Empirical CDF of the 300,000 simulations vs. the CDF  obtained by $k$ moments}
    \label{fig:Ex1-hist-vs-dens}
\end{figure}
A comparison of the first eight moments calculated respectively by a matrix exponential and by simulation is given in Table 
\ref{tab:moments}. 

\begin{table}[H]
  \begin{center}
    \begin{tabular}{c|r|r} 
      \textbf{Order} & \textbf{Matrix exp} & \textbf{Simulation} \\ \hline
      1 & -0.8240 &  -0.8247\\
      2 & 2.8630 & 2.8639\\
      3 & -6.751 & -6.797\\
      4 &  33.21  &  33.33 \\
      5 &  -122.4 &  -124.5 \\
      6 &  708.9 & 716.6 \\
      7 &   -3233 &  -3323 \\
      8 &  20633 &  21024
    \end{tabular}
  \end{center}
  \caption{Moments calculated on exact method and on simulation}
    \label{tab:moments}
\end{table}
In Figure \ref{fig:Ex1-hist-vs-dens} we show a histogram of 300,000 simulated data as compared to estimated densities (and one cdf) based on a different number of moments. It is clear that the challenging shape of the density will in general require a high number of moments in order to obtain a good approximation. The cdf, which has a smoother behaviour and therefore is easier to approximate, seems to provide a good fit to the simulated data. How sensitive the cdf approximation is to the number of moments used is shown in Figure \ref{fig:cdfs}. The method seems at first eye to be pretty robust, which is also confirmed in Table \ref{tab:quantiles}, where quantiles which are obtained from the approximating cdfs for different numbers of moments, are all within a sensible range from each other.

\begin{table}[h!]
  \begin{center}
    \begin{tabular}{c|r|r} 
      \textbf{Number of moments} & \textbf{$2.5\%$ quantile} & \textbf{$97.5\%$ quantile} \\ \hline
      10 & -4.05 &  2.15\\
      20 & -3.95 & 2.10\\
      30 & -4.00 & 2.05\\
      40 & -3.95 &  2.05 \\
      50 & -4.00 &  2.05  \\
    \end{tabular}
  \end{center}
  \caption{Quantiles calculated using the approximating cdf based on a different number of moments. The simulated quantiles are $-4.10$ and $1.95$ respectively.}
    \label{tab:quantiles}
\end{table}

\begin{figure}[h]
    \includegraphics[scale=0.50]{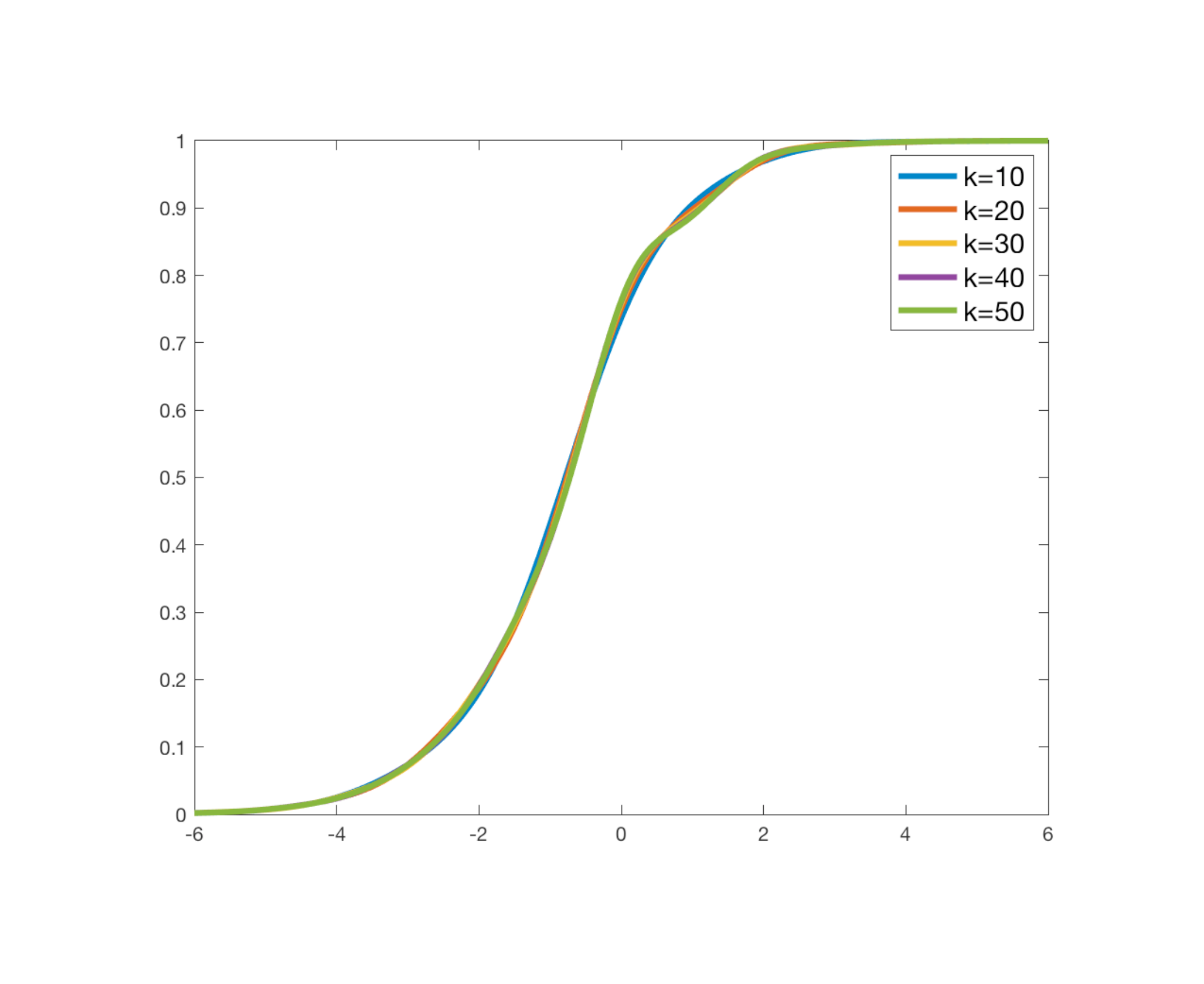}
    \caption{Approximation of the CDF based on a different number of moments.}
    \label{fig:cdfs}
\end{figure}

\section{Conclusion}
In this paper we have established a matrix oriented approach for calculating the (discounted) rewards of time--inhomogeneous Markov processes with finite state--space. In particular, for applications to multi--state Markov models in life insurance
our approach provides an alternative to standard derivations in the literature, which are usually based on case--by--case derivations involving differential or integral equations. In the slightly more general set--up in this paper we provide a unifying approach to deriving reserves and moments (of, in principle, arbitrary orders) which has a simple numerical implementation. The Laplace--Stieltjes transform of the (discounted) future payments, which plays an important role in the derivation of the moments and whose derivation is based on probabilistic (sample path) arguments, has a strikingly simple form which would allow for a numerical inversion in order to obtain the cdf or density of the future payments as well. However, since the moments of all orders, in principle, are available we propose an alternative method involving approximation of the cdf and densities via orthogonal polynomial expansions based on central moments. While this method seems to be very robust concerning the cdf, the approximation of the density itself is more involved which stems from the fact that presence of lump sums mixed with continuous rates implies that the densities can have a very challenging form.

\bibliographystyle{plainnat}
\bibliography{PHBib}

\end{document}